\newtheorem{theorem}{Theorem}[section]
\newtheorem{question}[theorem]{Question}
\newtheorem{corollary}[theorem]{Corollary}
\newcommand{\Z}{\mathbb{Z}}
\renewcommand{\P}{\mathbb{P}}
\newcommand{\N}{\mathbb{N}}
\newcommand{\li}{\text{li}}
\newcommand\numberthis{\addtocounter{equation}{1}\tag{\theequation}}
\title{Random Multiplicative Walks on the Residues Modulo $\mathbf{n}$}
\author{Nathan McNew}
\begin{document}
\begin{abstract}
    We introduce a new arithmetic function $a(n)$ defined to be the number of random multiplications by residues modulo $n$ before the running product is congruent to 0 modulo $n$.  We give several formulas for computing the values of this function and analyze its asymptotic behavior.  We find that it is closely related to $P_1(n)$, the largest prime divisor of $n$.  In particular, $a(n)$ and $P_1(n)$ have the same average order asymptotically.  Furthermore, the difference between the functions $a(n)$ and $P_1(n)$ is $o(1)$ as $n$ tends to infinity on a set with density approximately $0.623$.  On the other hand however, we see that (except on a set of density zero) the difference between $a(n)$ and $P_1(n)$ tends to infinity on the integers outside this set.  Finally we consider the asymptotic behaviour of the difference between these two functions and find that $\sum_{n\leq x}\big( a(n)-P_1(n)\big) \sim \left(1-\frac{\pi}{4}\right)\sum_{n\leq x} P_2(n)$, where $P_2(n)$ is the second largest divisor of $n$.
\end{abstract}
\maketitle

\section{Background} \label{background}
Consider a ``multiplicative'' random walk on the set $\Z/n\Z$ of residues modulo $n$.  In each step of this walk a residue modulo $n$ is chosen, uniformly at random, and the current state is multiplied by it.  Because $\Z/n\Z$ does not form a group, but rather a monoid  under multiplication, this is not a transitive walk.  In fact, it is an absorbing random walk with a single absorbing state, $0 \pmod{n}$. So if the walk proceeds long enough then, with probability one, the walk will eventually arrive (and stay) at the residue 0.

For example, when $n=6$ this random walk is equivalent to a random walk on the directed graph depicted in Figure \ref{fig:graphneq6}. In this walk each of the edges leaving a vertex are equally likely to be chosen.  Note that it is possible to step from the residues 1 and 5 (the units modulo 6) to any of the residues modulo 6, but this is not true for the other residues, and that the only edge leaving 0 is a loop.

\begin{figure}[h!]
    \centering
    \vspace{-4mm}
    \begin{tikzpicture}[->,>=stealth',auto,
  thick,main node/.style={circle,draw}]
      \node[main node] (a1) at (-3,-1) {1};
      \node[main node] (a5) at (-3,1) {5};
      \node[main node] (a2) at (-1,0) {2};
      \node[main node] (a4) at (-1,2.0){4};
      \node[main node] (a3) at (1,-1.25) {3};
      \node[main node] (a6) at (3,0) {0};
      
{\path[every node/.style={font=\sffamily\scriptsize}]
    (a1) edge [in=270,out=210,loop,looseness=5] node[below]{} (a1)
        edge [out=20,bend left=20] node[above,pos=0.7]{} (a2)
        edge [bend right=22] node[above] {} (a3)
        edge [bend left=8] node[above,pos=0.7] {} (a4)
        edge [bend right=20] node[left,pos=0.7] {} (a5)
        edge [bend right=62,looseness=1] node[right,pos=0.85] {} (a6);}
{\path[every node/.style={font=\sffamily\scriptsize}]
    (a2) edge [in=270,out=210,loop,looseness=5] node [below,pos=0.5] {} (a2)
        edge [bend right=20] node[right,pos=0.7] {} (a4)
        edge [bend right=20] node[above,pos=0.7] {} (a6);} 
{\path[every node/.style={font=\sffamily\scriptsize}]
    (a3) edge [in=280,out=220,loop,looseness=5] node [below,pos=0.35] {} (a3)
        edge [bend right=11] node[above,pos=0.30] {} (a6);}
{\path[every node/.style={font=\sffamily\scriptsize}]
    (a5) edge [in=220,out=160,loop,looseness=5] node [below,pos=0.5]{} 
    (a5)
        edge [bend right=20] node[right] {} (a1)
        edge [bend left=70,looseness=1.2] node[above,pos=0.7] {} (a6)
        edge [bend left=15] node[right] {} (a2)
        edge [bend right=29] node[right] {} (a3)
        edge [bend left=10] node[right] {} (a4)
    (a4) edge [in=20,out=80,loop,looseness=5] node [below,pos=0.5]{}
    (a4)
        edge [bend right=20] node[right,pos=0.7] {} (a2)
        edge [bend left=25] node[above,pos=0.7] {} (a6)
    
    (a6) edge [in=0,out=60,loop,looseness=5] node [below,pos=0.5]{} (a6);}        
    \end{tikzpicture} 
    \vspace{-7mm}
    \caption{The directed graph depicting the random walk on the residues modulo 6.}
    \label{fig:graphneq6}
\end{figure}
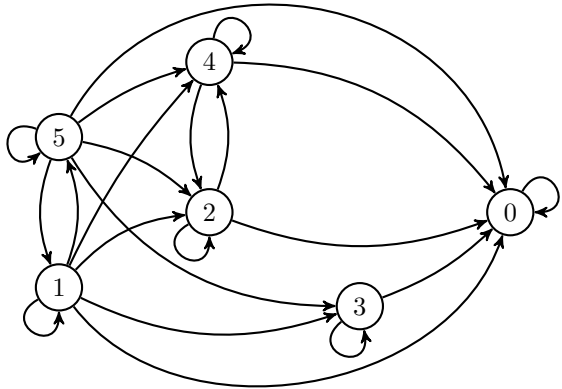

In this paper we are interested in the expected time to absorption, the number of steps it takes on average to reach $0 \pmod{n}$, if the initial state of the walk is $1 \pmod{n}$, or equivalently any unit modulo $n$.  For any positive integer $n$ we denote by $a(n)$ the expected time to absorption in the monoid $\Z/n\Z$, and study the behaviour of this arithmetic function.  Analyzing the walk depicted for $n=6$ above, one finds, for example, that $a(6)= 3.5$.

The problem of studying random walks on groups has been studied extensively, see for example \cite{Aldous}, \cite{Hildebrand}.  Many common random walks can in fact be viewed as walks on groups, for example the walk on the infinite line ($\Z$) or on a cycle ($\Z/n\Z$ under addition).  More recently, several authors have considered random walks on monoids \cite{Gretete}, \cite{Mairesse}.  Note that in the literature a random walk on a group or monoid is comprised of steps which are taken from some generating set of the group (or monoid).  In our case we take the generating set to be the entire monoid $\Z/n\Z$.  In the case of a random walk on a group, one would not generally take the entire group as a generating set as it would then be possible to transition between any two states in a single step.

We could alternatively view our problem as a random walk on the positive integers, $\N$, in which we repeatedly multiply the current state by an integer chosen uniformly at random from the range $[1,n]$ (or $[1,mn]$ where $m\geq 1$ is an integer). Our function $a(n)$ is then the expected number of multiplications before the product is divisible by $n$. 

The behavior of this function turns out to depend sensitively on the largest prime divisors of $n$. If  $n = p_1^{\alpha_1}p_2^{\alpha_2}\cdots p_k^{\alpha_k}$ with $p_1>p_2 > \ldots > p_k$, we denote by \[P_i(n) = \begin{cases} p_i & 1 \leq i \leq k \\ 1 & i>k \end{cases} \] and by $B(n)$ the sum of the prime divisors of $n$ taken with multiplicity, \[B(n) = \sum_{i=1}^k \alpha_i p_i.\]

As we will see in section \ref{sec:upplowbds} our function $a(n)$ is bounded between the functions $P_1(n)$ and $ B(n)$.  The behaviour of these functions (along with a handful of similarly defined functions) has been studied extensively (see \cite{JmdkIvicAverage,KnuthPardo,AlladiErdos,AlladiErdos2,NaslundMean,Kemeny,ErdosIvic,ErdosIvicPom}), much of it emphasizing how remarkably similar these two functions are.  Given that our function is closely related to these two functions, we analyze it similarly.

First, however, in Sections \ref{sec:recursiveeqn} and \ref{sec:squarefree} we obtain several expressions for $a(n)$.  Theorem \ref{thm:recurform} shows how to calculate $a(n)$ for any integer $n$, namely \begin{align*}
a(n) &= \frac{1}{n-\varphi(n)}\left(n+ \sum_{\substack{d|n\\d\neq n}}\varphi\left(d\right)a\left(d\right)\right).
\end{align*}
Unfortunately, this expression is recursive, but we are able to obtain expressions that aren't recursive in special cases. For prime powers (Theorem \ref{thm:primepow}) we show that $a(p^k) = k(p-1)+1$, and for squarefree values of $n$ we show (Theorem \ref{thm:squarefree}) that \[a(n) = \sum_{\substack{d|n\\d\neq 1}} (-1)^{\omega(d)+1}\frac{d}{d-\varphi(d)}.\]

In Section \ref{sec:upplowbds} we begin our study of the asymptotic behavior of this function by obtaining the average order of $a(n)$, (Corollary \ref{cor:avgord}) \[\sum_{n<x} a(n) \sim \frac{\pi^2x^2}{12 \log x},\] as a corollary of a result of Alladi and Erd\H{o}s.  This average order is asymptotically the same as both the functions $P_1(n)$ and  $B(n)$. 

Not only does $a(n)$ have the same average order as $P_1(n)$, but in fact we show in Theorem \ref{thm:01result} that on a set of integers with density about $0.623$ the function $P_1(n)$ is a very good approximation to $a(n)$ in the sense that $a(n) = P_1(n) + o(1)$ as $n$ goes to infinity over the integers in this set.  Surprisingly, however, we find that (with the exception of a set of measure 0) on the compliment of this set, where the approximation above is not true, we have that the difference between $a(n)$ and $P_1(n)$ tends to infinity.

As mentioned above, $a(n)$ is bounded between $P_1(n)$ and $B(n)$.  In addition to having the same average order asymptotically, Alladi and Erd\H{o}s show (see Theorem \ref{thm:allerd}) that
\[\sum_{n<x} \Big ( B(n) - P_1(n) \Big ) \sim \sum_{n<x} P_2(n).\]
In other words, the sum of the prime factors of $n$, omitting the largest prime factor is completely dominated by the second largest prime factor.  In light of this result, we consider the behavior of $a(n)-P_1(n)$.  In Theorem \ref{thm:ap1diff} we show that there exist constants $D_\ell$ (whose values are given explicitly in the proof) such that for any fixed integer $N\geq 1$ \[\sum_{n\leq x} a(n) - P_1(n) = \frac{x^{\frac{3}{2}}}{\log^2 x}\sum_{\ell=0}^{N-1} \frac{D_\ell}{\log^\ell x} + O_N\left(\frac{x^{\frac{3}{2}}}{\log^{N+2} x}\right).\]
As Corollary \ref{cor:ap1diff} we get that 
\[\sum_{n<x} \Big ( a(n) - P_1(n) \Big ) \sim \left(1-\frac{\pi}{4}\right)\sum_{n<x} P_2(n),\]
which we can interpret as saying that in an average sense, the value of $a(n)$ is approximately the sum of the largest prime factor of $n$ plus an extra contribution of a factor of $\left(1-\frac{\pi}{4}\right)$ times its second largest prime factor.

Throughout the paper we will use $X$ and $Y$ to denote random variables, $\mathbb{E}[X]$ to denote the expected value of $X$, and $\mathbb{P}$ to denote the probability of an event occurring.

\section{Computing the expected time to reach $0$} \label{sec:recursiveeqn} 
It does not appear to be possible to write down a general, closed formula for $a(n)$.  We are, however, able to give a recursive formula and estimates that are generally quite accurate.  The value of $a(n)$ is rarely an integer, and may never be unless $n$ is a prime power.  We begin with a few cases that are easily dealt with.

In the case when $n=p$ is prime every element of $\Z/p\Z$ besides $0 \pmod{p}$ is a unit.  As a result there is only one way of getting to $0$, namely by randomly choosing the residue $0$ as a multiple when stepping.  This is then a Bernoulli trial with probability of success $\frac{1}{p}$, and thus the expected time to walk to 0 is \[a(p)=p.\]    

The case $n=p^k$, when $n$ is a prime power, is somewhat more enlightening.  As in the prime case, the likelihood, at each step, of randomly selecting a unit (a residue coprime to $p$) is $\frac{p-1}{p}$.  Since the only way to progress toward the absorbing state is to select a non-unit, we expect to wait for $p$ iterations between ``meaningful'' steps that bring us closer to $0$. Unlike the prime case, however, it is unlikely (though possible) that the first step by a non-unit will take us directly to $0$.  It is much more likely that we only pick up one, or a couple of the necessary copies of $p$. In essence, we are taking a walk on the graph in Figure \ref{fig:graphpp}, in which each loop is weighted with probability $\frac{p-1}{p}$, each edge from $p^i$ to $p^j$, $0\leq i <j<k$ is weighted with probability $\frac{p-1}{p^{i-j+1}}$ and each edge from $p^i$ to $p^k$ by $\frac{1}{p^{k-i}}$.

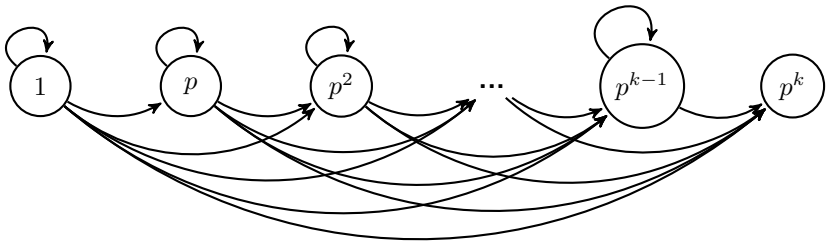
\begin{figure}[h!]
    \centering
    \vspace{-2mm}
    \begin{tikzpicture}[->,>=stealth',shorten >=1pt,auto,node distance=2cm,
  thick,main node/.style={circle,draw,minimum size=0.8cm},   dots/.style={font=\sffamily\large\bfseries}]

  \node[main node] (1) {$1$};
  \node[main node] (2) [right of=1] {$p$};
  \node[main node] (3) [right of=2] {${p^2}$};
  \node[dots] (4) [right of=3] {...};
  \node[main node] (5) [right of=4] {$p^{k{-}1}$};
  \node[main node] (6) [right of=5] {$p^k$};

  \path[every node/.style={font=\sffamily\small}]
    (1) edge [in=80,out=140,loop,looseness=4] node[above] {} (1)
        edge [bend right] node[above] {} (2)
        edge [bend right=40] node[above] {} (3)
        edge [bend right=40] node[above] {} (4)
        edge [bend right=40] node[above,pos=0.56] {} (5)
        edge [bend right=40] node[above,pos=0.62] {} (6)
    (2) edge [in=80,out=140,loop,looseness=4] node [left,pos=0.5]{} (2)
        edge [bend right] node[above] {} (3)
        edge [bend right=40] node[above] {} (4)
        edge [bend right=40] node[above,pos=0.8] {} (5)
        edge [bend right=40] node[above,pos=0.87] {} (6)
    (3) edge [in=80,out=140,loop,looseness=4] node [left,pos=0.5]{} (3)
        edge [bend right] node[above] {} (4)
        edge [bend right=40] node[above] {} (5)
        edge [bend right=40] node[above,pos=0.75] {} (6)
    (4) edge [bend right] node[above] {} (5)
        edge [bend right=40] node[above,pos=0.75] {} (6)
    (5) edge [in=80,out=140,loop,looseness=4] node [left,pos=0.5]{} (5)
        edge [bend right] node[above] {} (6);
\end{tikzpicture}
\vspace{-4mm}
    \caption{The directed graph depicting the random walk when $n=p^k$ is a prime power.  Note that each node in this graph represents all residues that have the same gcd with $n$ rather than individual residues, and that not all edges have the same weight.}
    \label{fig:graphpp}
\end{figure}

\begin{theorem}\label{thm:primepow}
When $n=p^k$ is a power of a prime the expected length of a multiplicative random walk on $\Z/p^k\Z$ is
\begin{equation}a(p^k)=k(p-1)+1. \label{ppower}
\end{equation}
\end{theorem}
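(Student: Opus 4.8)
The plan is to track only the $p$-adic valuation of the running product, since modulo $p^k$ this is the only feature that governs absorption. If the current product $x$ satisfies $\gcd(x,p^k)=p^i$ and we multiply by a uniformly random residue $r$, the new state has valuation $\min(i+v_p(r),k)$, which depends on $r$ solely through $v_p(r)$. A direct count shows that for $r$ chosen uniformly modulo $p^k$ one has $\mathbb{P}(v_p(r)=j)=\frac{p-1}{p^{j+1}}$ for $0\le j<k$, together with $\mathbb{P}(v_p(r)\ge k)=p^{-k}$ coming from the single residue $r=0$. Thus the walk is equivalent to a Markov chain on the valuations $0,1,\dots,k$ with $k$ absorbing, and $a(p^k)$ is the expected number of steps to reach $k$ starting from $0$.

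The key observation is that this increment distribution is exactly geometric: $v_p(r)$ has the law of the number of failures before the first success in a sequence of independent Bernoulli trials with success probability $\frac{p-1}{p}$ (failure probability $\frac1p$), and the truncation at $k$ is harmless because every outcome with $v_p(r)\ge k$ forces absorption in any case. I would therefore realize the entire walk by reading off a single stream of such Bernoulli trials, calling each trial a \emph{success} (a unit factor, which completes the current multiplication) or a \emph{failure} (a factor of $p$, which raises the valuation by one). Under this coupling each completed multiplication corresponds to one success, the accumulated valuation equals the number of failures observed so far, and the walk is absorbed precisely when the $k$-th failure appears.

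The crux is then a clean counting identity. Let $T$ denote the number of multiplications performed until absorption, and let $C$ be the number of successes occurring strictly before the $k$-th failure in the Bernoulli stream. Since the $k$-th failure lands inside the $(C+1)$-st multiplication — the $C$ already-completed multiplications, plus the one in progress that triggers absorption — we get $T=C+1$. Now $C=G_1+\dots+G_k$, where $G_i$ counts the successes falling between the $(i-1)$-st and $i$-th failures; each $G_i$ is geometric with mean $\frac{(p-1)/p}{1/p}=p-1$, so $\mathbb{E}[C]=k(p-1)$, and hence $a(p^k)=\mathbb{E}[T]=k(p-1)+1$.

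I expect the main obstacle to be the careful justification of the coupling in the second step: one must verify that passing from residues modulo $p^k$ (with its distinguished absorbing residue $0$ and the capping of valuations at $k$) to the idealized geometric stream does not alter the expected absorption time, i.e.\ that the two Markov chains have identical transition probabilities out of every non-absorbing state. Everything after that is routine. As alternatives, one could avoid the coupling and argue by strong induction on $k$ using the recursive formula of Theorem \ref{thm:recurform} (for which $p^k-\varphi(p^k)=p^{k-1}$), or apply Wald's identity to the valuation walk together with a short computation of the expected overshoot past $k$; but the Bernoulli-stream argument seems the most transparent.
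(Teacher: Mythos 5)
Your proof is correct, and it takes a genuinely different route from the paper's. The paper conditions on the first non-unit step: it waits an expected $p$ steps for a non-unit, conditions on the power $p^i$ that this step contributes (probability $\frac{p-1}{p^i}$ for $i<k$, $\frac{1}{p^k}$ for $i=k$), observes that the rest of the walk is a copy of the walk on $\Z/p^{k-i}\Z$, and closes the resulting recursion $a(p^k)=p+\sum_{i=1}^{k-1}\frac{p-1}{p^i}a(p^{k-i})+\frac{1}{p^k}a(1)$ by induction on $k$ together with an explicit arithmetic--geometric series computation. Your argument eliminates both the induction and the series: after reducing to the valuation chain — whose transition law you identify correctly, and which does match the Bernoulli-stream chain since $\mathbb{P}(F\ge k)=p^{-k}$ for the geometric increment $F$ as well, so capping at $k$ changes nothing — you realize every multiplication from one i.i.d.\ stream, note that absorption occurs at the $k$-th failure, that each success strictly before it terminates exactly one completed non-absorbing multiplication, and conclude $T=C+1$ with $\mathbb{E}[C]=k(p-1)$ by linearity of expectation. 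The point you flag as the main obstacle (identical transition probabilities out of every non-absorbing state) is indeed the only thing to check, and your computation $\mathbb{P}\left(v_p(r)=j\right)=\frac{p-1}{p^{j+1}}$, $\mathbb{P}\left(r=0\right)=p^{-k}$ settles it. Your approach is arguably cleaner and yields more: it exhibits $T$ as (one plus) a sum of $k$ i.i.d.\ geometric variables, i.e.\ essentially a shifted negative binomial, giving the full distribution rather than just the mean, and it is close in spirit to the geometric-variable technique the paper itself uses later for squarefree $n$ (Theorem \ref{thm:squarefree}). What the paper's argument buys instead is reusability: its conditioning on the first non-unit step is precisely what generalizes to the recursion of Theorem \ref{thm:recurform} for arbitrary composite $n$, where a single-stream coupling breaks down because several distinct primes must be tracked simultaneously.
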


\begin{proof}
When $k=1$ the result is the prime case discussed above.  We now proceed by induction, and suppose that equation \ref{ppower} holds for all exponents less than $k$.   By linearity of expectation we can break the expected time up as 
\[a(p^k)= \mathbb{E}[\text{time until first step by non-unit}] + \mathbb{E}[\text{time to step from there to 0}].\]
We know, as above, that the expected number of steps until our first non-unit is $p$, so we need only consider the second.  Once we've taken our first step by a non-unit, say to a residue whose gcd with $p^k$ is $p^i$, we see that we need to pick up an additional $k-i$ copies of $p$.  Doing so amounts to a walk on $\Z/p^{k-i}\Z$, which takes time $a(p^{k-i})$.  

Since the probability that our first non-unit step contains exactly $i$ copies of $p$ is $\frac{p-1}{p^{i}}$ for $i<k$ and $\frac{1}{p^k}$ when $i=k$, we get that \begin{align*}a(p^k) &= p + \sum_{i=1}^{k-1} \frac{p-1}{p^i}a(p^{k-i}) + \frac{a(p^0)}{p^k} \\
&= p + \sum_{i=1}^{k-1} \frac{p-1}{p^i}\left((k-i)(p-1)+1\right).
\end{align*}
Here we have used the induction hypothesis and that $a(1)=0$.  Rewriting this sum, we get that 
\begin{align*}
a(p^k) &= p + \frac{p-1}{p^{k}}\sum_{i=1}^{k-1} p^{k-i}\left((k-i)(p-1)+1\right)\\
&= p + \frac{p-1}{p^{k}}\sum_{i=1}^{k-1} p^{i}\left(i(p-1)+1\right)\\
&= p+ \frac{(p-1)^2}{p^{k}}\sum_{i=1}^{k-1} ip^{i}+ \frac{p-1}{p^{k}}\sum_{i=1}^{k-1} p^{i}\\
&= p+ \frac{(p-1)^2}{p^{k}}\left(\frac{p-p^k}{(1-p)^2} -\frac{(k-1)p^k}{1-p}\right) + \frac{p-1}{p^{k}}\left(\frac{p-p^k}{1-p}\right)\\
&= p+ \frac{p-p^k}{p^{k}} -(k-1)(1-p) - \frac{p-p^k}{p^k}\\
&= k(p-1)+1.\\
\end{align*}
This completes the inductive step and the proof.

\end{proof}

The residues modulo an arbitrary $n$ can be decomposed into the sets \[R_d = \{r \in \Z/n\Z:(r,n)=d\} = d\cdot \left(\Z/\tfrac{n}{d}\Z\right)^\times,\] for each of the divisors $d$ of $n$.  Each such set has size $\varphi(\frac{n}{d})$.  Our random walk on the residues modulo $n$ can also be interpreted as a walk on the classes $R_d$, starting in $R_1=(\Z/n\Z)^\times$. 

The key observation we made at the beginning of this proof to create a recursive formula for $a(p^k)$ works just as well for arbitrary composite $n$, although we aren't able to solve the recurrence.  For arbitrary $n$, the probability of randomly choosing a non-unit is $\frac{n-\varphi(n)}{n}$, where $\varphi(n)$ is the Euler totient function, hence we expect it will take $\frac{n}{n-\varphi(n)}$ steps before our first such choice.  Having done so, the probability that our chosen residue has gcd $d$ with $n$ is $\frac{|R_d|}{n-|R_1|}=\frac{\varphi\left(\frac{n}{d}\right)}{n-\varphi(n)}$ and thus we have the following recursive formula, which allows us to compute $a(n)$ for all $n$.

\begin{theorem} \label{thm:recurform}
For all $n>1$ the expected length of a random walk on $\Z/n\Z$ is 
\begin{align*}
a(n) &= \frac{n}{n-\varphi(n)}+ \sum_{\substack{d|n\\d\neq 1}}\frac{\varphi\left(\frac{n}{d}\right)a\left(\frac{n}{d}\right)}{n-\varphi(n)}\\
&= \frac{1}{n-\varphi(n)}\left(n+ \sum_{\substack{d|n\\d\neq n}}\varphi\left(d\right)a\left(d\right)\right).
\end{align*}
\end{theorem}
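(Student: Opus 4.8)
The plan is to mimic the first-step analysis used in the proof of Theorem \ref{thm:primepow}, but now tracking the gcd-class $R_d$ into which the running product first falls rather than a power of a single prime. Starting from a unit, every multiplication by a further unit keeps the product a unit, so the walk lingers among the units until the first time we multiply by a non-unit. Since a uniformly chosen residue is a non-unit with probability $\frac{n-\varphi(n)}{n}$, the number of steps up to and including this first non-unit multiplication is geometric with mean $\frac{n}{n-\varphi(n)}$. I would then split the absorption time $T$ by linearity of expectation as $T = T_1 + T_2$, where $T_1$ is the time of the first non-unit step and $T_2$ the remaining time, so that $\mathbb{E}[T_1] = \frac{n}{n-\varphi(n)}$ and it remains to compute $\mathbb{E}[T_2]$.

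The heart of the argument --- and the step I expect to require the most care --- is to show that, conditioned on the running product first landing in the class $R_d$, the expected remaining time to absorption equals exactly $a(n/d)$. First, multiplication by a unit preserves the gcd with $n$, so the product does land in some $R_d$ with $d\mid n$, $d\neq 1$; and among the $n-\varphi(n)$ non-units exactly $|R_d|=\varphi(n/d)$ have gcd $d$ with $n$, so the conditional probability of landing in $R_d$ is $\frac{\varphi(n/d)}{n-\varphi(n)}$. Now set $m=n/d$ and write the product upon entering $R_d$ as $P=dP'$, where $\gcd(P',m)=1$. For any subsequent multiplier $r$ one has $\gcd(Pr,n)=d\,\gcd(P'r,m)$, so the product reaches $0\pmod n$ precisely when $P'r_1r_2\cdots\equiv 0\pmod m$. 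Since $m\mid n$, a residue chosen uniformly from $\Z/n\Z$ reduces to a uniform residue modulo $m$; hence the evolution of the product modulo $m$ is itself a genuine random multiplicative walk on $\Z/m\Z$ begun at the unit $P'$, and its absorption time is exactly $T_2$. In particular this remaining expectation is $a(m)=a(n/d)$ independently of the chosen representative of $R_d$, which also justifies treating the classes $R_d$ as the states of a well-defined (lumped) Markov chain.

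Assembling these pieces by conditioning on the class first entered, and using the Markov property at the first non-unit step, gives
\[
a(n) = \mathbb{E}[T_1] + \mathbb{E}[T_2] = \frac{n}{n-\varphi(n)} + \sum_{\substack{d\mid n\\ d\neq 1}} \frac{\varphi(n/d)}{n-\varphi(n)}\,a(n/d),
\]
which is the first displayed formula. (The divisor $d=n$ may be included or omitted freely, since it contributes $\varphi(1)a(1)=0$.) Finally I would obtain the second formula purely by re-indexing: the substitution $e=n/d$ sends the divisors $d\neq 1$ of $n$ bijectively to the divisors $e\neq n$ and turns $\varphi(n/d)a(n/d)$ into $\varphi(e)a(e)$, yielding $a(n)=\frac{1}{n-\varphi(n)}\big(n+\sum_{e\mid n,\,e\neq n}\varphi(e)a(e)\big)$. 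The only genuine subtlety is the gcd-reduction of the second paragraph; everything else is bookkeeping together with the Markov property.
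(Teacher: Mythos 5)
Your proposal is correct and follows essentially the same route as the paper: the paper's (brief, pre-theorem) argument is exactly this first-step analysis --- wait a geometric $\frac{n}{n-\varphi(n)}$ steps for the first non-unit, condition on the class $R_d$ entered with probability $\frac{\varphi(n/d)}{n-\varphi(n)}$, and observe that the remaining time is $a(n/d)$, the recursion for prime powers in Theorem \ref{thm:primepow} generalized via the decomposition into the sets $R_d$. Your write-up in fact supplies a detail the paper leaves implicit, namely the gcd-reduction argument (writing the state as $dP'$ with $\gcd(P',n/d)=1$ and noting that a uniform residue mod $n$ reduces to a uniform residue mod $n/d$) justifying that the lumped walk on the classes $R_d$ is a genuine multiplicative walk on $\Z/(n/d)\Z$.
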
 

\section{Squarefree integers} \label{sec:squarefree}
In the case that the integer $n$ is squarefree we can solve the recursion in Theorem \ref{thm:recurform}.  It is convenient to use different methods, however, which will be useful later.  In the squarefree case, reaching the residue $0 \pmod{n}$ amounts to having randomly chosen a residue divisible by each prime dividing $n$ at least once in our walk.  For any prime $p$ dividing $n$, we can let $X_p$ be a random variable which counts the number of steps taken before randomly selecting a residue divisible by $p$.  Then all of the random variables $X_p$ are independent, geometric random variables with $\mathbb{E}[X_p] = p$ and 
\[a(n) = \mathbb{E}\left[\max_{p|n}\{X_p\}\right].\]
While the expected value of a maximum of random variables is not, in general, well behaved, it is straightforward to obtain a formula in the case of independent, geometric random variables.  Note that this is possible despite the fact that our variables are not identically distributed.  (Jeske and Blessinger observe \cite{Jeske} that this formula is surprisingly rare in the literature.) 

\begin{theorem} \label{thm:squarefree}
For $n\geq 2$ squarefree
\[a(n) = \sum_{\substack{d|n\\d\neq 1}} (-1)^{\omega(d)+1}\frac{d}{d-\varphi(d)}.\]
\end{theorem}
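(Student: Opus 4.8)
The plan is to compute $a(n) = \mathbb{E}\!\left[\max_{p|n} X_p\right]$ directly from the tail-sum formula for the expectation of a nonnegative integer-valued random variable. Writing $M = \max_{p|n} X_p$, I would start from
\[
a(n) = \mathbb{E}[M] = \sum_{k=0}^{\infty} \mathbb{P}(M > k).
\]
Since each $X_p$ is geometric with success probability $\tfrac{1}{p}$ we have $\mathbb{P}(X_p > k) = \left(1 - \tfrac{1}{p}\right)^{k}$, and since the events $\{X_p \le k\}$ are independent across the primes $p \mid n$, the event $\{M \le k\}$ is their intersection, so
\[
\mathbb{P}(M > k) = 1 - \prod_{p \mid n}\left(1 - \left(1 - \tfrac{1}{p}\right)^{k}\right).
\]

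Next I would expand the product by inclusion–exclusion. Setting $q_p = 1 - \tfrac{1}{p}$ and expanding $\prod_{p\mid n}(1 - q_p^{k})$ over subsets $S$ of the primes dividing $n$ gives
\[
\mathbb{P}(M > k) = \sum_{\substack{S \subseteq \{p : p \mid n\}\\ S \neq \emptyset}} (-1)^{|S|+1} \prod_{p \in S} q_p^{k}.
\]
Because the sum over $S$ is finite, I can interchange it with the sum over $k$ without any convergence concerns, and then sum each resulting geometric series in $k$:
\[
a(n) = \sum_{S \neq \emptyset} (-1)^{|S|+1} \sum_{k=0}^{\infty} \left(\prod_{p \in S} q_p\right)^{k} = \sum_{S \neq \emptyset} (-1)^{|S|+1} \frac{1}{1 - \prod_{p \in S} q_p}.
\]

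Finally, I would translate the sum over subsets of primes into a sum over divisors. Each nonempty $S$ corresponds to a squarefree divisor $d = \prod_{p \in S} p \neq 1$ of $n$, and since $n$ is squarefree every divisor $d \neq 1$ arises exactly once, with $|S| = \omega(d)$ and $\prod_{p \in S} q_p = \prod_{p \mid d}\left(1 - \tfrac{1}{p}\right) = \varphi(d)/d$. Substituting, the summand becomes $(-1)^{\omega(d)+1}/(1 - \varphi(d)/d) = (-1)^{\omega(d)+1} d/(d - \varphi(d))$, which yields the claimed formula. I expect the only real care to be in the bookkeeping of the inclusion–exclusion signs and in verifying the geometric parameter $\mathbb{P}(X_p > k) = \left(1-\tfrac{1}{p}\right)^{k}$; once these are pinned down the interchange of summations and the identification $\prod_{p \in S}\left(1 - \tfrac{1}{p}\right) = \varphi(d)/d$ are routine, so there is no substantial analytic obstacle.
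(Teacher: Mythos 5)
Your proof is correct and follows essentially the same route as the paper: tail-sum formula for the expectation of the maximum, independence of the geometric variables $X_p$, inclusion--exclusion expansion of the product, interchange of summation, and summation of the resulting geometric series. The only cosmetic difference is that you index the inclusion--exclusion by subsets of primes and then convert to divisors, whereas the paper writes the expansion directly as a sum over divisors $d \mid n$, $d \neq 1$ — the same step in different notation.
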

\begin{proof}
Let $X = \max_{p|n}\{X_p\}$. Then
\begin{align*}
a(n)= \mathbb{E}\left[\max_{p|n}\{X_p\}\right] &= \sum_{i=0}^\infty \mathbb{P}\left[X > i\right] = \sum_{i=0}^\infty \left(1-\mathbb{P}\left[X \leq i\right]\right) \\
&= \sum_{i=0}^\infty \left(1-\prod_{p|n} \mathbb{P}\left[X_p \leq i\right]\right) \\
&= \sum_{i=0}^\infty \left(1-\prod_{p|n} \left(1-\left(\frac{p-1}{p}\right)^i\right) \right)\\
&= \sum_{i=0}^\infty \sum_{\substack{d|n\\d\neq 1}} (-1)^{\omega(d)+1}\frac{\varphi(d)^i}{d^i}\\
&= \sum_{\substack{d|n\\d\neq 1}} \frac{(-1)^{\omega(d)+1}}{1-\frac{\varphi(d)}{d}} = \sum_{\substack{d|n\\d\neq 1}} (-1)^{\omega(d)+1}\frac{d}{d-\varphi(d)}.
\end{align*}
\end{proof}
Note that for $n=p$ this reduces to the equation above, and for $n=pq$, (where $p \neq q$) gives 
\begin{equation} a(pq) = p + q - \frac{pq}{p+q-1} = p + \frac{q(q-1)}{p+q-1}. \label{eq:twoprimes} \end{equation}
This can also be obtained from Theorem \ref{thm:recurform}.

\section{The average order of the expected time} \label{sec:upplowbds}
Because it is necessary to have randomly chosen at least one residue divisible by each prime dividing $n$ and, in particular, we expect the largest prime divisor, $P_1(n)$, to require $P_1(n)$ steps, we know that this function serves as a lower bound for $a(n)$.  

We can obtain an upper bound for $a(n)$ by imagining the situation where we wait for each prime divisor of $n$ separately.  Rather than allowing our random walk to accumulate each prime as it is randomly chosen, we first wait to pick up a copy of $P_1(n)$, then $P_2(n)$, etc, waiting separately for each copy of a prime that divides $n$ multiple times.  Since we expect each prime $p$ dividing $n$ to require $p$ steps, we find that an upper bound for $a(n)$ is the sum of all of the prime divisors of $n$, taken with multiplicity, defined as $B(n)$ in the introduction.  To summarize,
\[P_1(n) \leq a(n) \leq B(n).\]
Now these functions, $P_1(n)$ and $B(n)$, despite arising as relatively crude lower and upper bounds for $a(n)$ aren't so different.  As a matter of fact,  Alladi and Erd\H{o}s studied both of them \cite{AlladiErdos}, and showed that both have the same average order.
\begin{theorem}[Alladi, Erd\H{o}s (1977)] \label{thm:allerd} As $x \to \infty$,
\[\sum_{n<x} P_1(n) \sim \sum_{n<x} B(n) \sim \frac{\pi^2x^2}{12 \log x},\]
and furthermore the difference between these two sums is dominated by the contribution of the second largest prime factor,
\[\left(\sum_{n<x} B(n) - \sum_{n<x} P_1(n) \right) = \sum_{n<x} \Big ( B(n) - P_1(n) \Big ) = \sum_{n<x} P_2(n) +O\left(\frac{x^{4/3}}{\log^3 x}\right)\] 
and that \[\sum_{n<x} P_2(n) \sim C\frac{x^{3/2}}{(\log x)^2}\] 
for some positive constant C. \label{thm:AllErd}
\end{theorem}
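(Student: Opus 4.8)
The plan is to reduce each of the three assertions to a sum over primes by classifying every $n$ according to its largest prime factor. Writing $n = P_1(n)\,m$, the cofactor $m = n/P_1(n)$ is a $P_1(n)$-smooth integer (every prime factor of $m$ is at most $P_1(n)$), and conversely every pair consisting of a prime $p$ and a $p$-smooth $m \le x/p$ yields a unique $n = pm \le x$ with $P_1(n) = p$. Letting $\Psi(y,z)$ denote the number of $z$-smooth integers up to $y$, this bijection gives
\[
\sum_{n \le x} P_1(n) = \sum_{p \le x} p\,\Psi\!\left(\tfrac{x}{p}, p\right).
\]
The decisive simplification is that when $p > \sqrt{x}$ every $m \le x/p < p$ is automatically $p$-smooth, so $\Psi(x/p,p) = \lfloor x/p\rfloor$; the remaining range $p \le \sqrt x$ contributes at most $x\,\pi(\sqrt x) = O(x^{3/2}/\log x)$, which is negligible.

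To extract the main term I would evaluate $\sum_{\sqrt x < p \le x} p\lfloor x/p\rfloor$ by grouping the primes according to the value $k = \lfloor x/p\rfloor$, so that $p$ ranges over $(x/(k+1), x/k]$ for $1 \le k < \sqrt x$. Using the prime number theorem in the form $\sum_{p \le y} p \sim y^2/(2\log y)$ on each block and noting that the summand decays like $k^{-2}$ (so that replacing $\log(x/k)$ by $\log x$ is harmless), this reduces the constant to the convergent series
\[
\sum_{k=1}^\infty \frac{2k+1}{k(k+1)^2} = \sum_{k=1}^\infty\left(\frac1k - \frac1{k+1} + \frac1{(k+1)^2}\right) = 1 + (\zeta(2) - 1) = \frac{\pi^2}{6},
\]
which yields $\sum_{n\le x} P_1(n) \sim \tfrac{\pi^2}{12}\,x^2/\log x$. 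For $\sum_{n \le x} B(n)$ I would use that $B$ is completely additive, so $\sum_{n\le x} B(n) = \sum_p p \sum_{j\ge 1}\lfloor x/p^j\rfloor$; the $j=1$ term is exactly the sum just evaluated, while the prime-power terms $j \ge 2$ are supported on $p \le \sqrt x$ and contribute $O(x\log\log x)$ by Mertens' theorem. Hence $\sum B(n) \sim \sum P_1(n) \sim \tfrac{\pi^2}{12}x^2/\log x$, and the apparent constant $1$ one might naively expect is cut to $\pi^2/12$ precisely by the floor functions.

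For the difference I would again exploit complete additivity: removing one copy of the top prime gives $B(n) - P_1(n) = B(n/P_1(n)) = B(m)$, so the bijection above turns the sum into $\sum_{p \le x}\sum_{m}B(m)$, the inner sum over $p$-smooth $m \le x/p$. Applying the first part locally, $B(m) = P_1(m) + B(m/P_1(m))$, and here $P_1(m) = P_2(n)$ for all but a sparse set of $n$ (those with $P_1(n)^2 \mid n$). The leading piece $\sum_{p}\sum_m P_1(m)$ is exactly $\sum_{n\le x} P_2(n)$, while the remaining piece is a ``third prime'' sum of order $x^{4/3}/\log^3 x$, which furnishes the stated error term. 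Finally, for $\sum_{n\le x} P_2(n)$ the same peeling shows the main term comes from $n = pqm$ with $q<p$ prime and $m$ a $q$-smooth cofactor; the dominant contribution is $\sum_{q<p,\,pq\le x} q$, and fixing $q$ and summing $p \in (q, x/q]$ by the prime number theorem gives $x\sum_{q\le\sqrt x}1/\log(x/q)$. A Laplace-type analysis (the substitution $q = x^w$ concentrates the mass at $w = \tfrac12$, i.e.\ $q \approx p \approx \sqrt x$) shows this is $\asymp x^{3/2}/(\log x)^2$ with an explicit positive constant, the contributions of the smooth cofactors $m>1$ adding a convergent series that only adjusts the constant.

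The main obstacle, as always with these sums, is uniformity and error-term bookkeeping rather than the leading-order heuristics. The clean identity $\Psi(x/p,p)=\lfloor x/p\rfloor$ for $p>\sqrt x$ removes any need for delicate smooth-number estimates in the decisive range, but to obtain the error term $O(x^{4/3}/\log^3 x)$ in the second assertion one must control $\sum_{m}(B(m)-P_1(m))$ uniformly over $p$, i.e.\ show that the expected size of the second-largest prime of the cofactor is genuinely one order smaller; this is where the argument must be iterated and the smooth-number counts in the intermediate range $p \le \sqrt x$ estimated carefully (via Mertens' theorem and partial summation) to confirm that they do not contaminate the main term.
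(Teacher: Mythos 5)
First, a point about the comparison itself: the paper does not prove Theorem \ref{thm:allerd} at all. It is quoted as an external result, attributed to Alladi and Erd\H{o}s \cite{AlladiErdos}, with the identification $C=8\zeta(3/2)/3$ credited by Ivi\'c \cite{Ivic} to Balasubramanian. So there is no internal proof to measure your argument against; it must stand on its own as a reconstruction of those papers. On that footing, your proof of the \emph{first} assertion is essentially correct and is the standard argument: the bijection $n\leftrightarrow(p,m)$ with $m$ a $p$-smooth cofactor, the identity $\Psi(x/p,p)=\lfloor x/p\rfloor$ for $p>\sqrt{x}$, the bound $x\,\pi(\sqrt{x})=O(x^{3/2}/\log x)$ for the range $p\le\sqrt{x}$, and the block computation $\sum_{k\ge1}(2k+1)/(k(k+1)^2)=\zeta(2)$ all check out; the one caveat is that for $k$ near $\sqrt{x}$ the blocks $(x/(k+1),x/k]$ are far too short for the prime number theorem, so the tail $k>K$ must be disposed of by the trivial bound $p\lfloor x/p\rfloor\le x$ (giving $\ll x\,\pi(x/K)$), which is what your $k^{-2}$-decay remark implicitly relies on. The reduction of $\sum_{n\le x}B(n)$ to the same main term via complete additivity and Mertens is also fine.

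The gaps are in the second and third assertions, and they are genuine rather than bookkeeping. (i) For the error term $O(x^{4/3}/\log^3 x)$ you must bound $\sum_{n\le x}\big(B(n)-P_1(n)-P_2(n)\big)$. The estimate your sketch suggests --- outside the sparse square-divisible cases every remaining prime factor is at most $P_3(n)$, and there are at most $\Omega(n)\ll\log x$ of them --- yields only $\ll \log x\sum_{n\le x}P_3(n)\asymp x^{4/3}/\log^2 x$, one full logarithm worse than claimed. Recovering the stated error needs a real iteration (for instance, writing the leftover as a sum over the $j$-th largest prime factors with multiplicity $Q_j(n)$, truncating at $j=3$ via $Q_j(n)\le n^{1/j}$, and then proving $\sum_{n\le x}P_3(n)\ll x^{4/3}/\log^3 x$ by a count in the style of part one); this is exactly the technical content of Alladi--Erd\H{o}s and is absent from the proposal. (ii) For $\sum_{n\le x}P_2(n)$, your reduction replaces $\sum_{q<p\le x/q}q=q\big(\pi(x/q)-\pi(q)\big)$ by $x/\log(x/q)$, i.e.\ silently discards $q\,\pi(q)$. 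But the mass of the $q$-sum sits at $q\approx\sqrt{x}$, where $q\,\pi(q)\approx 2x/\log x$ is of the \emph{same order} as the retained term; dropping it changes the resulting constant (one gets $4\zeta(3/2)$ rather than the correct $\tfrac{8}{3}\zeta(3/2)$). Since the theorem only asserts existence of some $C>0$ this is repairable, but as written the sketch computes a wrong asymptotic, and turning the Laplace-type heuristic into an actual asymptotic (not merely $\asymp$) requires a uniform two-variable partial summation of precisely the kind the paper itself carries out for $\sum_{n\le x}\big(a(n)-P_1(n)\big)$ in the proof of Theorem \ref{thm:ap1diff}; that argument is the natural template to follow.
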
 Ivi\'c \cite{Ivic} notes that the constant $C$ in their result is equal to ${8\zeta\left(\frac{3}{2}\right)}/3$, attributing the result to Balasubramanian.  He also shows that the sum of $P_2(n)$ admits an asymptotic expansion
\begin{theorem}[Ivi\'c, 1990] \label{thm:ivic} There exist constants $A_l$ such that for any integer $N\geq 1$ \[\sum_{n\leq x} P_2(n) = \frac{x^{3/2}}{\log^2 x}\sum_{l=0}^{N-1} \frac{A_l}{\log^l x} +O_N\left(\frac{x^{3/2}}{\log^{N+2} x}\right).\]
\end{theorem}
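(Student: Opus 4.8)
The plan is to reduce the sum to an explicit double sum over the two largest prime factors and then evaluate it by means of the prime number theorem together with its asymptotic expansion via the logarithmic integral $\li$. The starting point is the observation that, apart from a negligible set, the largest prime factor $P_1(n)$ occurs to the first power. Precisely, every $n$ with $P_1(n)^2 \nmid n$ factors uniquely as $n = pm$ with $p = P_1(n)$ a prime exceeding $P_1(m)$ and $m = n/p$, in which case $P_2(n) = P_1(m)$; conversely every pair $(m,p)$ with $p$ prime and $P_1(m) < p \le x/m$ arises this way. The integers with $P_1(n)^2 \mid n$ force $P_1(n) \le \sqrt x$ and hence $P_2(n) \le \sqrt x$, so their total contribution is at most $\sum_{p \le \sqrt x} p\, (x/p^2) = O(x\log\log x)$, which is $o_N(x^{3/2}/\log^{N+2}x)$ for every $N$. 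Thus
\begin{equation*}
\sum_{n\le x} P_2(n) = \sum_{\substack{m\ge 1\\ P_1(m) < x/m}} P_1(m)\Big(\pi(x/m) - \pi\big(P_1(m)\big)\Big) + O(x\log\log x),
\end{equation*}
where $\pi$ denotes the prime-counting function.

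Next I would organize the main sum by the value of the second largest prime. Writing $q = P_1(m)$ and $m = q\ell$ with $\ell$ running over the $q$-smooth integers ($P_1(\ell)\le q$), and noting that $p > q$ forces $q < \sqrt{x/\ell}$, the main term becomes $\sum_{\ell\ge 1} G_\ell(x/\ell)$ where $G_\ell(y) = \sum_{P_1(\ell)\le q < \sqrt y} q\big(\pi(y/q) - \pi(q)\big)$. The heart of the argument is the asymptotic expansion of the two-prime sum $G(y) = \sum_{q<\sqrt y} q(\pi(y/q) - \pi(q))$ obtained by dropping the mild constraint $q \ge P_1(\ell)$: inserting the de la Vall\'ee Poussin form of the prime number theorem, $\pi(t) = \li(t) + O(t\,e^{-c\sqrt{\log t}})$, together with $\li(t) = \tfrac{t}{\log t}\sum_{j<M}\tfrac{j!}{\log^j t} + O_M(t/\log^{M+1}t)$, and evaluating the resulting prime sums over $q$ by partial summation (the two pieces being concentrated near $q = \sqrt y$ and contributing leading constants $4$ and $4/3$), one gets $G(y) = \tfrac{y^{3/2}}{\log^2 y}\sum_{j<M} g_j/\log^j y + O_M(y^{3/2}/\log^{M+2}y)$ with $g_0 = 8/3$.

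Finally I would sum over $\ell$. Since the dominant $q$ are of size about $\sqrt y$, a positive power of $x$, the smoothness condition $P_1(\ell)\le q$ only perturbs the lower-order terms, while the factor $\ell^{-3/2}$ coming from $(x/\ell)^{3/2}$ renders $\sum_\ell \ell^{-3/2} = \zeta(3/2)$ convergent; expanding $\log^{-(2+j)}(x/\ell)$ in powers of $1/\log x$ and collecting terms produces the claimed series $\sum_{l<N} A_l/\log^l x$, with leading constant $A_0 = g_0\,\zeta(3/2) = \tfrac{8}{3}\zeta(3/2) = C$ in agreement with Theorem \ref{thm:allerd}. The main obstacle is precisely this last assembly carried out uniformly in $N$: one must establish the expansion of the two-prime sum $G(y)$ with an error a fixed power of $1/\log y$ smaller than the main term, bound the error introduced by relaxing the $q$-smoothness of $\ell$, and justify interchanging the $\ell$-summation with the $1/\log x$-expansion while keeping the total error within $O_N(x^{3/2}/\log^{N+2}x)$. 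The exponentially small prime number theorem error is harmless throughout, since it beats every power of $\log x$.
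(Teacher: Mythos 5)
The paper offers no proof of this statement to compare against: Theorem \ref{thm:ivic} is quoted as an external result of Ivi\'c, with the leading constant $C = \frac{8}{3}\zeta\left(\frac{3}{2}\right)$ attributed to Balasubramanian. So the only meaningful comparison is with the paper's proof of Theorem \ref{thm:ap1diff}, which establishes an expansion of exactly the same shape for $\sum_{n\le x}\big(a(n)-P_1(n)\big)$, and your plan is structurally the same argument: discard the $n$ with $P_1(n)^2\mid n$ at a cost of $O(x\log\log x)$, parametrize the rest as $n = pq\ell$ with $p > q = P_2(n) \ge P_1(\ell)$, evaluate the resulting double prime sum by the prime number theorem and the asymptotic series for $\li$, and finally sum over the cofactor $\ell$ using $\sum_\ell \log^j\ell/\ell^{3/2} = (-1)^j\zeta^{(j)}\left(\tfrac{3}{2}\right)$ with all expansions truncated at order $N$ --- precisely the devices in the paper's treatment of $S_1$, $S_2$ and the assembly of the constants in \eqref{eq:constants}. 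Your arithmetic checks out: the two pieces $\sum_{q<\sqrt y}q\,\pi(y/q)$ and $\sum_{q<\sqrt y}q\,\pi(q)$ do have leading constants $4$ and $4/3$, so $g_0 = 8/3$ and $A_0 = \frac{8}{3}\zeta\left(\frac{3}{2}\right)$, in agreement with Theorem \ref{thm:allerd}. Dropping the constraint $q \ge P_1(\ell)$ is also harmless, and for a sharper reason than the one you give: since $q\,\pi(y/q) \ll y/\log y$ for $q \le \sqrt y$, each $\ell$ acquires a spurious contribution $\ll \pi\big(P_1(\ell)\big)\,\frac{x/\ell}{\log(x/\ell)} \ll x/\log(x/\ell)$, which summed over $\ell \le x^\epsilon$ is $O(x^{1+\epsilon})$, while $\ell > x^\epsilon$ contributes $\ll \sum_{\ell>x^\epsilon}(x/\ell)^{3/2} \ll x^{3/2-\epsilon/2}$ trivially; both are power-savings small, hence beat every power of $\log x$.

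The caveat is that your proposal is a plan rather than a proof: the central step --- the expansion of $G(y) = \sum_{q<\sqrt y}q\big(\pi(y/q)-\pi(q)\big)$ in powers of $1/\log y$, uniformly to order $N$ --- is asserted, not executed, as you yourself acknowledge. I do not regard this as a step that would fail: partial summation turns the $q$-sum into $\int_2^{\sqrt y} t\,\li(y/t)\,dt/\log t$ plus negligible errors, and after the substitution $t = y^u$ a Laplace-type expansion of $\int^{1/2} y^u\,du/\bigl(u(1-u)^{k+1}\bigr)$ about $u = \tfrac{1}{2}$ yields a genuine power series in $1/\log y$; this is the same kind of computation the paper carries out between \eqref{eq:cleans1} and \eqref{eq:s1pint}. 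But until that expansion is written down with its error term, what you have is a correct strategy with the correct leading constant, not yet a complete proof.
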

We will return to Theorem \ref{thm:ivic} in Section \ref{sec:ap1diff}.  For now, we note that we can easily obtain the average order of $a(n)$ from Theorem \ref{thm:allerd}.
\begin{corollary} As $x \to \infty$, \label{cor:avgord}
\[\sum_{n<x} a(n) \sim \frac{\pi^2x^2}{12 \log x}.\]
\end{corollary}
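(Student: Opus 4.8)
The plan is to use the elementary two-sided bound $P_1(n) \le a(n) \le B(n)$ established at the opening of this section, together with the Alladi--Erd\H{o}s asymptotics of Theorem \ref{thm:allerd}, in a straightforward squeeze argument. Since the inequality $P_1(n) \le a(n) \le B(n)$ holds for every individual $n$, I would first sum it termwise over all $n < x$, which is legitimate because summation preserves inequalities, to obtain
\[\sum_{n<x} P_1(n) \;\le\; \sum_{n<x} a(n) \;\le\; \sum_{n<x} B(n).\]

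Next I would invoke Theorem \ref{thm:allerd}, which asserts that the two flanking sums satisfy $\sum_{n<x} P_1(n) \sim \frac{\pi^2 x^2}{12\log x}$ and $\sum_{n<x} B(n) \sim \frac{\pi^2 x^2}{12\log x}$ as $x \to \infty$. Dividing the displayed chain of inequalities through by $\frac{\pi^2 x^2}{12\log x}$, both the leftmost and rightmost ratios tend to $1$, so the ratio $\bigl(\sum_{n<x} a(n)\bigr)\big/\bigl(\frac{\pi^2 x^2}{12\log x}\bigr)$ is trapped between two quantities that each converge to $1$. By the squeeze theorem it too converges to $1$, which is exactly the claimed asymptotic $\sum_{n<x} a(n) \sim \frac{\pi^2 x^2}{12\log x}$.

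There is essentially no obstacle in this argument: the only genuine content is the termwise inequality, which was justified earlier by two easy combinatorial comparisons (waiting for the single largest prime gives the lower bound, while waiting for each prime factor separately and with multiplicity gives the upper bound), and the analytic heavy lifting is entirely absorbed into the cited theorem of Alladi and Erd\H{o}s. If anything, the point worth emphasizing is conceptual rather than technical: because $P_1(n)$ and $B(n)$ share the same average order, any arithmetic function sandwiched between them inherits that average order, and $a(n)$ is one such function.
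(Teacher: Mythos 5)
Your proof is correct and is essentially the paper's own argument: the paper states this corollary immediately after establishing $P_1(n) \le a(n) \le B(n)$ and Theorem \ref{thm:allerd}, with the squeeze over the summed inequalities being exactly the intended (and only needed) step. Nothing is missing.
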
  

\section{The typical behaviour of $a(n)$} \label{sec:typical}
In addition to having the same average order, Erd\H{o}s and Pomerance show \cite{ErdosPomerance} that, on a set of asymptotic density one, $P_1(n) \sim B(n)$.  So, in a sense these bounds answer the question of how large $a(n)$ is.  But which function is a better estimate of $a(n)$?  For the majority of integers $n$, $P_1(n)$ turns out to be a remarkably good approximation for $a(n)$, and in fact the difference between these two functions tends to zero on this set. However, for almost all of the remaining integers, a set which still has positive density, the difference $a(n)-P_1(n)$ tends to infinity. 

\begin{theorem} \label{thm:01result}
But for a set of integers, $n$, with asymptotic density $0$, we have that if $P_2(n) < P_1(n)^{1/2}$ then \[a(n) = P_1(n)+o(1)\] as $n \to \infty$, and otherwise, if $P_2(n) > P_1(n)^{1/2}$, then
\[a(n) - P_1(n) \to \infty\] as $n\to\infty$.  The asymptotic density of the
set of $n$ with $P_2(n) < P_1(n)^{1/2}$ has asymptotic density $0.623...$, the
Golomb--Dickman constant.

\end{theorem}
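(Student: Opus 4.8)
The plan is to work throughout with the probabilistic description of $a(n)$. Exactly as in the squarefree case, for general $n=\prod p_i^{\alpha_i}$ the walk reaches $0\pmod n$ precisely when, for every $p\mid n$, the running product has accumulated at least $\alpha_i$ copies of $p_i$; since the $p$-adic valuations of a uniform residue are independent across the primes dividing $n$ (by CRT), the waiting times $\tilde X_{p}$ to accumulate $\alpha_p$ copies of $p$ are \emph{independent}, so $a(n)=\mathbb{E}\!\left[\max_{p\mid n}\tilde X_p\right]$, where $\mathbb{E}[\tilde X_p]=\mu_p=a(p^{\alpha_p})=\alpha_p(p-1)+1$. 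The whole theorem is driven by the heuristic $a(n)-P_1(n)\asymp P_2(n)^2/P_1(n)$, so the threshold $P_2=P_1^{1/2}$ is exactly where this quantity transitions between $\to 0$ and $\to\infty$. I would first record the standard facts that, outside a set of density $0$, one has $P_1(n)\,\|\,n$ (automatic once $P_1>\sqrt n$) and $\big|2\log P_2(n)-\log P_1(n)\big|\to\infty$; the latter holds because the joint limiting law of $\big(\log P_1/\log n,\log P_2/\log n\big)=(u,v)$ is absolutely continuous, so the strip where $P_2^2/P_1=n^{2v-u}$ stays bounded away from both $0$ and $\infty$ is an $O(1/\log n)$-neighbourhood of the diagonal $\{2v=u\}$ and hence has density $0$. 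This reduces the dichotomy to proving $P_2^2/P_1\to 0\Rightarrow a-P_1\to 0$ and $P_2^2/P_1\to\infty\Rightarrow a-P_1\to\infty$.

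For the divergence half the argument is short. Using independence of the divisibility-by-$P_1$ and divisibility-by-$P_2$ events, $\max_{p\mid n}\tilde X_p\ge\max(X_{P_1},X_{P_2})$ for independent geometrics of means $P_1,P_2$, giving the clean lower bound $a(n)\ge a\big(P_1(n)P_2(n)\big)$; the two-prime evaluation \eqref{eq:twoprimes} then yields
\[
a(n)-P_1(n)\ \ge\ a(P_1P_2)-P_1\ =\ \frac{P_2(P_2-1)}{P_1+P_2-1}\ \ge\ \frac{P_2^2}{2P_1}\big(1+o(1)\big),
\]
which tends to infinity whenever $P_2^2/P_1\to\infty$.

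The convergence half is the heart of the proof and the main obstacle. Assuming $P_1\|n$ so that $\tilde X_{P_1}=X_{P_1}$ is geometric of mean $P_1$, I would write
\[
a(n)-P_1(n)=\mathbb{E}\Big[\big(\max_{p\neq P_1}\tilde X_p-X_{P_1}\big)^{+}\Big]\le\sum_{p\mid n,\ p\neq P_1}\mathbb{E}\big[(\tilde X_p-X_{P_1})^{+}\big],
\]
and then establish a uniform tail estimate $\mathbb{E}\big[(\tilde X_p-X_{P_1})^{+}\big]\ll \mu_p^2/P_1$ valid for all primes and multiplicities (for $p=P_2$, $\alpha=1$ this is sharp and reproduces $P_2^2/P_1$; for $\mu_p\ll P_1$ it comes from $\mathbb{P}[\tilde X_p>X_{P_1}]\approx \mu_p/P_1$ together with a conditional excess of size $\approx\mu_p$). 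Summing gives $a(n)-P_1(n)\ll P_1^{-1}\sum_{p\neq P_1}\mu_p^2$, and I would bound $\sum_{p\neq P_1}\mu_p^2\le\big(\max_{p\neq P_1}\mu_p\big)\sum_{p\neq P_1}\mu_p$, where on a density-one set $\max_{p\neq P_1}\mu_p\asymp P_2$ (small primes to large powers being rare) and $\sum_{p\neq P_1}\mu_p\le B(n)-P_1\le\Omega(n)P_2\ll P_2\log\log n$. Hence $a(n)-P_1(n)\ll (P_2^2/P_1)\log\log n$, which tends to $0$ on the good set because $P_2^2/P_1$ is polynomially small there. The real difficulty lies in making the tail bound genuinely uniform over small primes carried to large powers, and in quarantining the several density-$0$ exceptional sets (ramified $P_1$, anomalously large $\Omega(n)$ or $\sum\mu_p^2$, and the near-diagonal strip) into a single density-$0$ set serving the whole statement.

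Finally, for the density assertion I would compute directly. Conditioning on $P_1(n)=p=x^u$ and writing $n=pm$, the event $P_2(n)<\sqrt{P_1(n)}$ becomes $P_1(m)<x^{u/2}$ for a typical $x^u$-smooth cofactor $m$ of size $x^{1-u}$; the Dickman function $\rho$ yields the conditional probability $\rho\!\big(\tfrac{2(1-u)}{u}\big)$, and integrating against $\tfrac{dp}{p}\leftrightarrow\tfrac{du}{u}$ gives
\[
\lim_{x\to\infty}\frac1x\#\{n\le x:P_2(n)<P_1(n)^{1/2}\}=\int_0^1\rho\!\Big(\tfrac{2(1-u)}{u}\Big)\frac{du}{u}=\int_0^\infty\frac{\rho(w)}{w+2}\,dw,
\]
and I would identify the last integral with the Golomb--Dickman constant through its standard Dickman-integral representation (equivalently, cite the known evaluation of the density of $n$ with $P_1(n)>P_2(n)^2$). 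As a check, repeating the computation from the marginal density $f_1(u)=\tfrac1u\rho(\tfrac{1-u}{u})$ of $\log P_1/\log n$ recovers the same integrand, and specializing to exponent $1$ reproduces the classical $\mathbb{P}[P_1>\sqrt n]\to\log 2$.
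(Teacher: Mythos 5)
Your proposal follows essentially the same route as the paper's own proof. The core of both arguments is identical: realize $a(n)$ as the expected maximum of independent waiting times, isolate the largest prime factor so that $a(n)-P_1(n)$ becomes the expected excess waiting time after acquiring $P_1(n)$, and bound the per-prime excess. Your identity $\mathbb{E}[(X_q-X_{P_1})^+]=\mathbb{E}[X_q]\,\mathbb{P}[X_q>X_{P_1}]$ is exactly the paper's memorylessness computation $\mathbb{E}[Y_{q_i}]=\frac{q_i-1}{1+p/q_i-1/q_i}<q_i^2/p$; your divergence half via $a(n)\ge a(P_1P_2)$ and \eqref{eq:twoprimes} is the same lower bound $\gg P_2^2/P_1$ in different packaging; and both proofs discard the same kinds of density-zero exceptional sets (a strip around $P_2=P_1^{1/2}$, and prime-power pathologies). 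For the density statement the paper simply cites Wheeler, while you rederive it; your integral $\int_0^\infty \rho(w)(w+2)^{-1}\,dw$ does evaluate to the Golomb--Dickman constant, so this piece is correct, just more self-contained than necessary.

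There is, however, one genuine quantitative slip in the convergence half. Your stated reduction is to the implication $P_2^2/P_1\to 0\Rightarrow a(n)-P_1(n)\to 0$, but what your argument actually yields is $a(n)-P_1(n)\ll (P_2^2/P_1)\log\log n$, and your justification that $P_2^2/P_1$ is ``polynomially small'' on the good set does not follow from the strip removal you performed: deleting the strip where $|2\log P_2-\log P_1|$ stays bounded only gives $P_2^2/P_1\to 0$ at an unspecified rate, while for any \emph{fixed} $c>0$ the set $\{n: n^{-c}<P_2^2/P_1<1\}$ has positive density, so no fixed polynomial rate can hold outside a density-zero set. The repair is to widen the excluded strip so that its width still tends to zero (hence density zero) but beats the multiplicity factor: this is precisely what the paper does, discarding $P_1^{1/2}/\log n<P_2<P_1^{1/2}\log n$, so that on the good set $P_2^2/P_1<1/\log^2 n$, which dominates even the crude bound $\omega(n)\le\log n$ and avoids any appeal to Hardy--Ramanujan. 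By contrast, the step you flag as ``the real difficulty'' --- the uniform tail bound $\mathbb{E}[(\tilde X_p-X_{P_1})^+]\ll\mu_p^2/P_1$ for prime powers --- is actually fine: $\mathbb{P}[\tilde X_p>X_{P_1}]\le\mu_p/P_1$ by Bernoulli's inequality, and the conditional excess is at most $\max_{0\le i\le\alpha_p}a(p^{\alpha_p-i})=\mu_p$ since the expected remaining time is monotone in the number of copies still needed; the paper sidesteps this entirely by discarding the density-zero set of $n$ divisible by a composite prime power exceeding $P_2(n)$.
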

\begin{proof}
The set of integers satisfying the inequality $P_2(n) < P_1(n)^{1/2}$ was studied by Wheeler \cite{wheeler}, \index{Golomb--Dickman constant} who shows that the density exists and is equal to the constant above.  In what follows we disregard those $n$ where any of the following hold:
\begin{enumerate}
\item $P_1(n)<\log n$.
\item $P_1(n)^{1/2}/\log n <P_2(n) <P_1(n)^{1/2} \log  n$. 
\item $n$ is divisible by a composite prime power greater than $P_2(n)$.
\end{enumerate} 
However, each of these conditions occur only on a set of integers with density 0. 

Let $p = P_1(n)$.  By linearity of the expectation, we can write \[a(n) = \mathbb{E}[X_p] + \mathbb{E}[Y],\] where $X_p$ is the random variable counting the number of steps before we randomly choose a residue divisible by $p$, and $Y$ is the time required to pick up any remaining factors after choosing a residue divisible by $p$. Now $\mathbb{E}[X_p] = p$, so it suffices to study $\mathbb{E}[Y]$.  

Let $q_1,q_2,\ldots$ denote the distinct, maximal prime powers dividing $n/p$ and let the random variables $Y_{q_i}$ denote the number of steps in the walk after acquiring a factor of $p$ and before acquiring a factor of $q_i$.  So \[Y = \max_i\{Y_{q_i}\} \leq \sum_i Y_{q_i}.\] 
In what follows we will assume that $q_i$ is prime, as the prime power case will only be smaller.  Because of the memoryless property of the geometric distribution, \index{geometric distribution} we have that 
\begin{align*}
\mathbb{E}[Y_{q_i}] &= \mathbb{E}[X_{q_i}]P(X_p<X_{q_i})  \\
&= q_i\sum_{k=1}^\infty P(X_p =k)P(X_{q_i}>k)  \\
&= q_i\sum_{k=1}^\infty \left(1-\frac{1}{p}\right)^{k-1}\frac{1}{p}\left(1-\frac{1}{q_i}\right)^k  \\
&= \frac{q_i}{p}\left(1-\frac{1}{q_i}\right)\sum_{k=0}^\infty \left(1-\frac{1}{p}\right)^{k}\left(1-\frac{1}{q_i}\right)^k  \\
&= \frac{q_i-1}{p}\left(\frac{1}{1-\left(1-\frac{1}{p}\right)\left(1-\frac{1}{q_i}\right)}\right) \\
&= \frac{q_i-1}{1+\frac{p}{q_i} -\frac{1}{q_i}} < \frac{q_i^2}{p}.
\end{align*}
Now, in the first case, where $P_2(n) < P_1(n)^{1/2}$, we have that $q_i \leq P_2(n) \leq \sqrt{p}/\log n$ (since we disregarded the set of integers whose second largest prime factor was smaller than $\sqrt{p}$ by less than a factor of $\log n$) and so the  expression above is less than $(\log n)^{-2}$. Since $n$ has at most $\log n$ distinct prime factors, we have that
\begin{align*}
\mathbb{E}[Y] \ &\leq \ \sum_{i}\mathbb{E}[Y_{q_i}] \ < \ \frac{\log n}{(\log n)^2} \ < \ \frac{1}{\log n} \ = \ o(1)
\end{align*}
as $n \to \infty$. 

In the second case where $P_2(n)^2 > P_1(n)$, we can assume that $q=P_2(n) > p^{1/2}\log n$, as we also disregarded above the set of integers where this was not the case, and so 
\begin{align*}
\mathbb{E}[Y_q] &=  \frac{q-1}{1+\frac{p}{q} -\frac{1}{q}} > \frac{q-1}{1+\frac{p}{q}} \\
&\gg \frac{q}{1+\frac{p}{q}} \gg \frac{q}{p/q} = \frac{q^2}{p} > \log^2 n.
\end{align*}
Since this goes to infinity as $n$ does, we have that $a(n)-P_1(n) \geq \mathbb{E}[Y_q]$ does as well.
\end{proof}

The same method of proof used here can also be extended to show, for example, that (but for a set of density zero) on the integers where $P_3(n)^3< P_1(n)\cdot P_2(n)$ we have $a(n) = P_1(n)+\frac{P_2(n)^2}{P_1(n)+P_2(n)} + o(1)$ as $n \to \infty$.  Note that the set of integers on which this condition holds both contains and has density strictly greater than the set where $P_2(n)^2 < P_1(n)$.  

It similarly follows that for each fixed $k$ there exists a set of integers on which the value of $a(n)$ is approximated, up to an error term that is $o(1)$, by a function involving only by the largest $k$ prime factors of $n$, where the density of these sets approaches $1$ as $k$ goes to infinity. However the expression for $a(n)$ in terms of these prime factors becomes increasingly complicated.

\section{The average time spent waiting after the largest prime factor} \label{sec:ap1diff}
We know that $P_1(n)\leq a(n) \leq B(n)$, with equality holding in each case if and only if $n$ is prime. In light of Theorems \ref{thm:allerd} and \ref{thm:ivic} it makes sense to similarly consider the difference $a(n)-P_1(n)$, which also corresponds, by linearity of expectation, to the expected number of steps spent waiting after the random walk has picked up a factor of the largest prime divisor of $n$.  
\begin{theorem} \label{thm:ap1diff}
There exist constants $D_\ell$ such that, for any fixed integer $N \geq 1$,
\[\sum_{n\leq x}\big( a(n) - P_1(n) \big)= \frac{x^{\frac{3}{2}}}{\log^2 x}\sum_{\ell=0}^{N-1} \frac{D_\ell}{\log^\ell x} + O_N\left(\frac{x^{\frac{3}{2}}}{\log^{N+2} x}\right).\]
In particular, $D_0 = \frac{8\zeta\left(\frac{3}{2}\right)}{3}\left(1-\frac{\pi}{4}\right)$, and an expression that can be used to compute $D_\ell$ for any value of $\ell$ is given in the proof \eqref{eq:constants} below. \label{thm:amp}
\end{theorem}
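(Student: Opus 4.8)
The plan is to start from the probabilistic decomposition established in the proof of Theorem~\ref{thm:01result}. By linearity of expectation,
\[a(n) - P_1(n) = \mathbb{E}[Y], \qquad Y = \max\Big(0,\ \max_i (X_{q_i} - X_p)\Big),\]
where $p = P_1(n)$, the $q_i$ run over the maximal prime powers dividing $n/p$, and $X_p, X_{q_i}$ are independent geometric variables recording the first step at which a factor of $p$, resp.\ $q_i$, is acquired. The single term $q = P_2(n)$ dominates, and for it the computation in the proof of Theorem~\ref{thm:01result} already gives
\[\mathbb{E}[Y_q] = \frac{q-1}{1 + \frac{p}{q} - \frac{1}{q}} = \frac{q(q-1)}{p+q-1} = q - \frac{pq}{p+q-1} =: g(p,q),\]
in agreement with $a(pq)-p$ from \eqref{eq:twoprimes}. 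The first step is therefore to replace the summand $a(n)-P_1(n)$ by $g(P_1(n),P_2(n))$. The resulting error has two sources: the gap between $\mathbb{E}[Y]$ and the single term $\mathbb{E}[Y_q]$, which is controlled by $\sum_{i}\mathbb{E}[Y_{q_i}]\ll P_3(n)^2/P_1(n)$, and the non-squarefree configurations (a prime-power $q=\ell^a$, or $p^2\mid n$, etc.). Following the estimates in Theorem~\ref{thm:01result} together with the Alladi--Erd\H{o}s treatment of $B(n)-P_1(n)-P_2(n)$ in Theorem~\ref{thm:allerd}, I expect to bound the total error by $O(x^{4/3+\varepsilon})$, which is $\ll_N x^{3/2}/\log^{N+2}x$ for every fixed $N$ and so is absorbed into the error term.

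With the reduction in hand it remains to obtain the full asymptotic expansion of $\sum_{n\le x} g(P_1(n),P_2(n))$. Writing $n = pqm$ with $p>q$ the two largest prime factors and $P_1(m)\le q$, this becomes a double sum over the primes $p,q$ weighted by the count of $q$-smooth $m\le x/(pq)$, which is exactly the shape Ivi\'c analyzes for $\sum_{n\le x}P_2(n)$ in Theorem~\ref{thm:ivic}. Using the identity $g(p,q) = q - \frac{pq}{p+q-1}$, I would split the sum as $\sum_{n\le x}P_2(n)$, whose expansion is given by Theorem~\ref{thm:ivic}, minus $S(x) := \sum_{n\le x}\frac{P_1(n)P_2(n)}{P_1(n)+P_2(n)-1}$, and then run Ivi\'c's argument on $S(x)$ with the weight $\frac{pq}{p+q-1}$ in place of $q$.

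To extract the constants I would convert the double sum to an integral via the prime number theorem and rescale $p = \sqrt{x}\,w$, $q = \sqrt{x}\,v$. The main regime is $v,w\asymp 1$ (both primes near $\sqrt{x}$), where $g$ differs from $q$ by the bounded factor $\frac{q-1}{p+q-1}\approx \frac{v}{w+v}$; the sum over the smooth part $m$ contributes the shared factor $\zeta(\tfrac32)$, which cancels in the comparison. Carrying out the $p$-integral produces $\int_{v}^{1/(vm)} \frac{q^2}{p+q}\,dp$, and after the substitution $v = r/\sqrt{m}$ the ratio of the leading term of our sum to that of $\sum_{n\le x}P_2(n)$ becomes $\frac{3}{2}\int_0^1 r^2\log\frac{1+r^2}{2r^2}\,dr$, independent of $m$. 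The plan is to evaluate this elementary integral exactly: it equals $\frac23-\frac\pi6$, so the ratio is $1-\frac\pi4$ and $D_0 = \left(1-\frac\pi4\right)\frac{8\zeta(3/2)}{3}$, matching the stated value and yielding Corollary~\ref{cor:ap1diff}. The higher constants $D_\ell$ arise, just as in Ivi\'c's proof, by expanding the factors $1/\log p$ and $1/\log q$ and the smooth-number asymptotics in powers of $1/\log x$ and collecting coefficients; this produces the explicit formula~\eqref{eq:constants}.

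The main obstacle is the uniform error control in the reduction step. Unlike $P_2(n)$, the quantity $a(n)-P_1(n)$ genuinely depends on \emph{all} prime factors of $n$ through a maximum of geometric variables, so one must show that the third-largest and smaller prime factors, together with every non-squarefree configuration, contribute below the $x^{3/2}/\log^{N+2}x$ threshold for all $N$ simultaneously. A secondary difficulty is that the dominant regime is $p\approx q$, precisely where the naive expansion of $g(p,q)$ in powers of $q/p$ diverges; the weight $\frac{pq}{p+q-1}$ must therefore be kept intact through the integral analysis rather than expanded. This is exactly the mechanism by which the transcendental constant enters, through $\int_0^1\frac{r^4}{1+r^2}\,dr = \frac\pi4-\frac23$.
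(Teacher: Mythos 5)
Your reduction step is, in substance, the paper's own: the paper also replaces $a(n)-P_1(n)$ by $a(pq)-p=\frac{q(q-1)}{p+q-1}$ (its equation \eqref{eq:twoprimes}), but via the deterministic sandwich $a(pq)\le a(pqm)\le a(pq)+a(m)$ rather than the geometric-variable decomposition, and it controls the error by $\sum a(m)\le \sum B(m) = \sum\big(B(n)-P_1(n)-P_2(n)\big)=O\big(x^{4/3}/\log^3x\big)$ over composite $n$, using Theorem \ref{thm:allerd}. Your error route works for essentially the same reason: $P_3(n)^2/P_1(n)\le P_3(n)\le B(n)-P_1(n)-P_2(n)$ whenever $n$ has at least three prime factors counted with multiplicity, and the $P_1(n)^2\mid n$ configurations contribute only $O(x\log\log x)$, as the paper checks. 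Where you genuinely diverge is the evaluation of the main term. The paper never compares against $\sum_{n\le x}P_2(n)$: it expands $\frac{q^2}{p+q}=\frac{q^2}{p}\sum_{i\ge 0}(-q/p)^i$, integrates term by term against $\li$-asymptotics in two ranges $p\lessgtr\sqrt{x/m}$, and carries explicit constants through to \eqref{eq:constants}; the transcendental constant then appears as the alternating series $\sum_{i\ge3}\frac{(-1)^{i+1}}{2i-3}=1-\frac\pi4$. Your alternative for the leading term --- subtract $S(x)=\sum\frac{P_1P_2}{P_1+P_2-1}$ from Ivi\'c's expansion and compute the leading ratio by rescaling $p,q$ by $\sqrt{x/m}$ --- is a legitimately different and rather clean derivation of $D_0$: your integral $\int_0^1 r^2\log\frac{1+r^2}{2r^2}\,dr$ does equal $\tfrac23-\tfrac\pi6$, so the ratio is $\tfrac32\big(\tfrac23-\tfrac\pi6\big)=1-\tfrac\pi4$, which recovers Corollary \ref{cor:ap1diff} and, combined with $A_0=\tfrac{8\zeta(3/2)}{3}$, the stated $D_0$.

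Two problems, one of them serious. First, a correction: the geometric-series expansion of $\frac{1}{p+q}$ does not diverge in the relevant range --- it converges for every $q<p$, merely non-uniformly as $q\to p$ --- and the paper's proof proceeds exactly by this expansion; after the $q$-integration the $i$-sums become convergent alternating series, which is precisely where $\pi$ enters there. So the obstacle you cite as forcing the weight to be "kept intact" is not real, and your choice is stylistic, not necessary. Second, and more seriously: your outline only delivers the $\ell=0$ term. For the full statement --- existence of all $D_\ell$ with error $O_N\big(x^{3/2}/\log^{N+2}x\big)$ for every fixed $N$ --- you cannot cite Ivi\'c, since Theorem \ref{thm:ivic} is available only as a black box for the weight $q$, not for $\frac{pq}{p+q-1}$. ``Running Ivi\'c's argument'' on that weight is exactly the bulk of the paper's proof: the split into $S_1$ and $S_2$, the repeated partial summation with $\li$-expansions, the sum over smooth $m$ producing the derivatives $\zeta^{(j)}\big(\tfrac32\big)$, and the bookkeeping that assembles \eqref{eq:constants}. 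In your proposal this is asserted by analogy (``collecting coefficients \ldots produces the explicit formula'') rather than argued, so the theorem for general $N$, which is its actual content beyond Corollary \ref{cor:ap1diff}, remains unproved in your outline.
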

Comparing this to Theorem \ref{thm:AllErd} and the constant found by Balasubramanian (which can also be obtained by methods similar to the proof of Theorem \ref{thm:amp}) we can get a result about how $a(n)$ depends, in an average sense, on its largest two prime factors.
\begin{corollary} \label{cor:ap1diff}
The difference between $a(n)$ and $P_1(n)$ satisfies \[\sum_{n\leq x} \big( a(n) - P_1(n) \big) \sim \left(1-\frac{\pi}{4}\right)\sum_{n\leq x} P_2(n).\]
\end{corollary}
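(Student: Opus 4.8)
The plan is to obtain the result directly as a quotient of the leading-order asymptotics already supplied by Theorem \ref{thm:ap1diff} and Theorem \ref{thm:ivic}, with no new analysis required. First I would apply Theorem \ref{thm:ap1diff} with $N=1$, which gives
\[\sum_{n\leq x}\big(a(n)-P_1(n)\big) = D_0\frac{x^{3/2}}{\log^2 x} + O\!\left(\frac{x^{3/2}}{\log^3 x}\right),\]
so that $\sum_{n\leq x}\big(a(n)-P_1(n)\big) \sim D_0\, x^{3/2}/\log^2 x$. Likewise, Theorem \ref{thm:ivic} at $N=1$ yields $\sum_{n\leq x} P_2(n) \sim A_0\, x^{3/2}/\log^2 x$, where $A_0$ is the leading coefficient in Ivi\'c's expansion.

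The second step is to pin down $A_0$. Comparing the $N=1$ case of Theorem \ref{thm:ivic} against the asymptotic $\sum_{n<x} P_2(n) \sim C\, x^{3/2}/(\log x)^2$ recorded in Theorem \ref{thm:allerd}, and inserting the value $C = 8\zeta(3/2)/3$ noted immediately afterward, forces $A_0 = C = 8\zeta(3/2)/3$. The value of $D_0$ is already handed to us by Theorem \ref{thm:ap1diff}, namely $D_0 = \frac{8\zeta(3/2)}{3}\big(1-\frac{\pi}{4}\big)$, so in particular $D_0 = \big(1-\frac{\pi}{4}\big) A_0$.

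Finally I would take the ratio of the two leading terms. Since both sums grow like a constant times $x^{3/2}/\log^2 x$ with the same scale, we obtain
\[\frac{\sum_{n\leq x}\big(a(n)-P_1(n)\big)}{\sum_{n\leq x} P_2(n)} \longrightarrow \frac{D_0}{A_0} = 1 - \frac{\pi}{4}\]
as $x \to \infty$, which is exactly the claimed equivalence. There is no genuine obstacle here: all the substantive work is carried out in the proof of Theorem \ref{thm:ap1diff}, which both establishes the expansion and computes $D_0$. The only point requiring a moment's care is bookkeeping with the normalizations, ensuring that the constants $C$, $A_0$, and the quoted $D_0$ are all measured against the same $x^{3/2}/\log^2 x$ scale; once that is checked, the ratio of leading coefficients is immediate and the corollary follows.
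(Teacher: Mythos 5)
Your proposal is correct and matches the paper's own (implicit) argument: the corollary is obtained exactly by comparing the leading coefficient $D_0 = \frac{8\zeta(3/2)}{3}\left(1-\frac{\pi}{4}\right)$ from Theorem \ref{thm:ap1diff} with Balasubramanian's constant $C = \frac{8\zeta(3/2)}{3}$ in $\sum_{n\leq x} P_2(n) \sim C x^{3/2}/\log^2 x$, so the ratio of the two sums tends to $1-\frac{\pi}{4}$. Your added care in identifying $A_0 = C$ via Theorems \ref{thm:ivic} and \ref{thm:allerd} (and implicitly that both leading constants are nonzero) is exactly the bookkeeping the paper leaves to the reader.
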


\begin{proof}[Proof of Theorem \ref{thm:amp}]
Note first that if $n=p$ is prime then $a(p)-P_1(p) = 0$, so the primes contribute nothing to this sum.  Using $p$ and $q$ to denote prime numbers we can then write \begin{align*}
    \sum_{n\leq x} \big(a(n) - P_1(n)\big) &= \sum_{p<x}\sum_{q\leq p} \sum_{\substack{m\leq x/pq\\P_1(m) \leq q}} \left(a(pqm)-p\right) \\
    &= \sum_{p<x}\sum_{q\leq p} \sum_{\substack{m\leq x/pq\\P_1(m) \leq q}} \big(a(pq)-p +O(a(m))\big)
\end{align*}
since $a(pq) \leq a(pqm) \leq a(pq) + a(m)$.  We can bound the error term above as \begin{align*}
\sum_{p<x}\sum_{q\leq p} \sum_{\substack{m\leq x/pq\\P_1(m) \leq q}} a(m) &\leq \sum_{p<x}\sum_{q\leq p} \sum_{\substack{m\leq x/pq\\P_1(m) \leq q}} B(m)\\
&= \sideset{}{'}\sum_{n\leq x}  \big( B(n) - P_1(n) - P_2(n) \big)  =O\left(\frac{x^{4/3}}{\log^3 x}\right)
\end{align*}
using Theorem \ref{thm:allerd}. Here $\sum^{'}$ denotes that the sum is over composite integers.  Dividing the main sum into two sums based on whether the largest prime divisor divides the integer twice we get that \begin{align*}
    \sum_{n\leq x} \big(a(n) - P_1(n)\big) &=  \sum_{p\leq \sqrt{x}} \sum_{\substack{m\leq x/p^2\\P_1(m) \leq p}} a(p^2)-p +  \sum_{p<x}\sum_{q < p} \sum_{\substack{m\leq x/pq\\P_1(m) \leq q}} a(pq)-p+O\left(x^{4/3}\right) \\
    &= \sum_{p\leq \sqrt{x}} \sum_{\substack{m\leq x/p^2\\P_1(m) \leq p}} \big( p - 1 \big) +  \sum_{p<x}\sum_{q < p} \sum_{\substack{m\leq x/pq\\P_1(m) \leq q}} \frac{q(q-1)}{p+q-1} +O\left(x^{4/3}\right). \\ 
\end{align*} 

The first sum above is 
\begin{align*}
    \sum_{p\leq \sqrt{x}} \sum_{\substack{m\leq x/p^2\\P_1(m) \leq p}} \big( p - 1 \big) \leq \sum_{p\leq \sqrt{x}} p \left\lfloor\frac{x}{p^2}\right\rfloor \leq \sum_{p\leq \sqrt{x}} \frac{x}{p} = O\left(x \log \log x\right),
\end{align*}
so it can be absorbed into the error term.  This leaves the second sum, which we simplify and reorder as \begin{align*}
    \sum_{p<x}\sum_{q < p} \sum_{\substack{m\leq x/pq\\P_1(m) \leq q}} \frac{q(q-1)}{p+q-1} &= \sum_{p<x}\sum_{q < p} \sum_{\substack{m\leq x/pq\\P_1(m) \leq q}} \left(\frac{q^2}{p+q} - \frac{q}{p+q-1} +\frac{q^2}{(p+q)(p+q-1)}\right)\\
    &= \sum_{p<x}\sum_{q < p} \sum_{\substack{m\leq x/pq\\P_1(m) \leq q}} \frac{q^2}{p+q} +O(x)\\
    &=\sum_{m\leq x/6} \sum_{\substack{p<x/m\\ p>P_1(m)}} \sum_{\substack{q <p \\ q \leq x/mp \\ q\geq P_1(m)}} \frac{q^2}{p+q} + O(x) \\
    &=\sum_{m\leq x/6} \sum_{\substack{p\leq \sqrt{\frac{x}{m}}\\ p>P_1(m)}} \sum_{\substack{q <p\\ q\geq P_1(m)}} \frac{q^2}{p+q} + \sum_{m\leq x/6} \sum_{\substack{\sqrt{\frac{x}{m}} <p<\frac{x}{m} \\ p>P_1(m)}} \sum_{\substack{q \leq \frac{x}{mp} \\ q\geq P_1(m)}} \frac{q^2}{p+q} + O(x) \\
    &=\hspace{5mm} S_1 \hspace{5mm} + \hspace{5mm} S_2 \hspace{5mm} + O(x) \numberthis \label{eq:s1s2}.
\end{align*}

We evaluate the two sums $S_1$ and $S_2$ separately. Taking first $S_1$, we find that the sum is dominated by small values of $m$. The contribution to $S_1$ by those $m \geq x^\epsilon$ for any fixed small $0<\epsilon<\frac{1}{4}$ is
\begin{align*}
     \sum_{x^\epsilon \leq m \leq x/6} \sum_{\substack{p\leq \sqrt{\frac{x}{m}}\\ p>P_1(m)}} \sum_{\substack{q <p\\ q\geq P_1(m)}} \frac{q^2}{p+q} &\leq \sum_{x^\epsilon < m \leq x/6} \sum_{\substack{p\leq \sqrt{\frac{x}{m}}}} \sum_{\substack{q <p}} q  \\
    & \ll \sum_{x^\epsilon \leq m \leq x/6} \sum_{\substack{p\leq \sqrt{\frac{x}{m}}}} \frac{p^2}{\log p} =  \ll \sum_{x^\epsilon \leq m \leq x/6} \left(\frac{x}{m}\right)^{3/2} =O\left(x^{\frac{3-\epsilon}{2}}\right).
\end{align*}
and likewise the contribution from either $p$ or $q \leq x^\epsilon$ is insignificant, so we can write 

\begin{equation}
S_1 = \sum_{m<x^\epsilon} \sum_{\substack{x^\epsilon < p\leq \sqrt{\frac{x}{m}}}} \sum_{\substack{x^\epsilon < q <p}} \frac{q^2}{p+q} + O\left(x^{\frac{3-\epsilon}{2}}\right). \label{eq:cleans1} 
\end{equation}

We treat the inner summation using partial summation 
\begin{align*}
    \sum_{\substack{x^\epsilon < q <p}} \frac{q^2}{p+q}  &= \int_{x^\epsilon}^p \frac{s^2}{(p+s)\log s} ds + O\left(p^2e^{-\sqrt{\log p}}\right).
\end{align*}
We tackle the integral by expanding the fraction as a geometric series
\[\frac{s^2}{(p+s)\log s} = \frac{s^2}{p\log s}\sum_{i=0}^\infty \left(-\frac{s}{p}\right)^i\] (which converges for all $s<p$) and integrate term by term,\

\begin{align*}
    \int_{x^\epsilon}^p \frac{s^2}{(p+s)\log s} ds &= \sum_{i=0}^\infty \left( \frac{(-1)^i}{p^{i+1}} \int_{x^\epsilon}^p \frac{s^{i+2}}{\log s} ds\right)\\
    &=  \sum_{i=0}^\infty \left(\frac{(-1)^i}{p^{i+1}} \left(\li\left(p^{i+3}\right)-\li\left(x^{\epsilon(i+3)}\right)\right) \right)\\
    &=  \sum_{i=0}^\infty \left(\frac{(-1)^i}{p^{i+1}} \left(\frac{p^{i+3}}{\log p^{i+3}}\left(\sum_{k=0}^{N-1}\frac{k!}{\log^k p^{i+3}}+O_N\left(\frac{1}{\log^{N} p^{i+3}}\right)\right)+O\left(\frac{x^{\epsilon(i+3)}}{\epsilon(i+3)\log x}\right)\right)\right) \\
    &=  \frac{p^2}{\log p}\sum_{i=0}^\infty \sum_{k=0}^{N-1} \left(\frac{(-1)^i k!}{(i+3)^{k+1}\log^k p}\right)+O_N\left(\frac{p^2}{\log^{N+1} p}\right).
\end{align*}
Here we have used the asymptotic expansion for $\li(x)$, where $N\geq 0$ is a fixed integer. Plugging this into \eqref{eq:cleans1} and applying partial summation to the variable $p$, we get 
\begin{align*}
    S_1 &=  \sum_{m<x^\epsilon}\left( \sum_{x^{\epsilon}<p<\sqrt{\frac{x}{m}}}\left(\frac{p^2}{\log p}\sum_{i=0}^\infty \sum_{k=0}^{N-1}\left(\frac{(-1)^i k!}{(i+3)^{k+1}\log^k p}\right)+O_N\left(\frac{p^2}{\log^{N+1} p}\right)\right) \right)\\
    &= \sum_{m<x^\epsilon} \left( \int_{x^\epsilon}^{\sqrt{\frac{x}{m}}} \frac{t^2}{\log^2 t}\sum_{i=0}^\infty \sum_{k=0}^{N-1}\left(\frac{(-1)^i k!}{(i+3)^{k+1}\log^k t}\right)dt+O_N\left(\frac{(x/m)^{3/2}}{\log^{N+2} \frac{x}{m}}\right)\right). \numberthis \label{eq:qintegral1} \\
\end{align*}

Integrating by parts one finds that \begin{align*}
\int \frac{t^2}{\log^{k+2} t}dt &= \frac{3^{k+1}}{(k+1)!}\li(t^3)-t^3\sum_{h=0}^{k}\frac{3^h(k-h)!}{(k+1)!\log^{h+1} t} +C \\
&= t^3\sum_{h=k+1}^{N}\frac{3^{k-h}h!}{(k+1)!\log^{h+1} t}+O_N\left(\frac{t^3}{\log^{N+2} t}\right)
\end{align*}
since the sum in the first line above simply subtracts off the first $k+1$ terms in the asymptotic expansion of $ \frac{3^{k+1}}{(k+1)!}\li(t^3)$.
Using this to evaluate the integral in \eqref{eq:qintegral1} we obtain
\begin{align*}
    \left. \sum_{i=0}^\infty \sum_{k=0}^{N-1}\left(\frac{(-1)^i k!}{(i+3)^{k+1}}\left(t^3\sum_{h=k+1}^{N}\frac{3^{k-h}h!}{(k+1)!\log^{h+1} t}+O_N\left(\frac{t^3}{\log^{N+2} t}\right)\right)\right)\right|_{t=x^\epsilon}^{t=\sqrt{\frac{x}{m}}}
\end{align*}

The lower limit of integration can easily be absorbed into the error term, so we need only concern the upper limit.  Evaluating this expression there, it becomes 

\begin{align*}
    &\sum_{i=0}^\infty \sum_{k=0}^{N-1}\left(\frac{(-1)^i k!}{(i+3)^{k+1}}\left(\frac{x}{m}\right)^{3/2}\sum_{h=k+1}^{N}\frac{3^{k-h}h!2^{h+1}}{(k+1)!\log^{h+1} \frac{x}{m}}\right)\\
    &=2\left(\frac{x}{m}\right)^{3/2}\sum_{i=0}^\infty (-1)^i\sum_{h=1}^{N}\frac{2^h h!}{\log^{h+1} \frac{x}{m}} \sum_{k=0}^{h-1}\frac{3^{k-h}}{(k+1)(i+3)^{k+1}} \\
    &=2\left(\frac{x}{m}\right)^{3/2} \sum_{h=0}^{N-1}\frac{2^{h+1} h!}{\log^{h+2} \frac{x}{m}} \sum_{k=0}^{h}\frac{3^{k-h-1}}{k+1}\sum_{i=3}^\infty \frac{(-1)^{i+1}}{i^{k+1}}. \numberthis \label{eq:s1pint}
\end{align*}

We therefore get the estimate \[S_1 = 2\sum_{m<x^\epsilon}\left(\left(\frac{x}{m}\right)^{3/2} \left(\sum_{h=0}^{N-1}\frac{2^{h+1} h!}{\log^{h+2} \frac{x}{m}} \sum_{k=0}^{h}\frac{3^{k-h-1}}{k+1}\sum_{i=3}^\infty \frac{(-1)^{i+1}}{i^{k+1}} +O_N\left(\frac{1}{\log^{N+2} \frac{x}{m}}\right)\right)\right).\]

We now treat $S_2$.  As was the case with $S_1$, we can ignore the contribution from those $m>x^\epsilon$, $q\geq x^\epsilon$ or $p>x^{1-\epsilon}$, so 
\begin{equation}
S_2 = \sum_{m < x^\epsilon} \sum_{\substack{\sqrt{\frac{x}{m}}< p < x^{1-\epsilon}}} \sum_{\substack{x^\epsilon < q <\frac{x}{mp}}} \frac{q^2}{p+q} + O\left(x^{\frac{3-\epsilon}{2}}\right). \label{eq:cleans2} 
\end{equation}
We also treat the innermost sum similarly to $S_1$, resulting in an integration similar to \eqref{eq:qintegral1} but with different limits of integration.

\begin{align*}
    \sum_{\substack{x^\epsilon < q <\frac{x}{mp}}} \frac{q^2}{p+q} &=\int_{x^\epsilon}^{\frac{x}{mp}} \frac{s^2}{(p+s)\log s} ds = \sum_{i=0}^\infty \left( \frac{(-1)^i}{p^{i+1}} \int_{x^\epsilon}^{\frac{x}{mp}} \frac{s^{i+2}}{\log s} ds\right)\\
    &=  \sum_{i=0}^\infty \left(\frac{(-1)^i}{p^{i+1}} \left(\li\left(\left({\frac{x}{mp}}\right)^{i+3}\right)-\li\left(x^{\epsilon(i+3)}\right)\right) \right)\\
    &=  \sum_{i=0}^\infty \left(\frac{(-1)^i}{p^{i+1}} \left(\li\left(\left({\frac{x}{mp}}\right)^{i+3}\right)\right)\right) +O\left(\frac{x^{3\epsilon}}{p} \right).
\end{align*}
Expanding the main term above using the asymptotic expansion for the logarithmic integral we get
\begin{align*}
    \sum_{i=0}^\infty &\left(\frac{(-1)^i}{p^{i+1}}  \left(\sum_{k=0}^{N-1}\frac{x^{i+3}k!}{(mp)^{i+3}\log\left( \left({\frac{x}{mp}}\right)^{i+3}\right)^{k+1}}\right)\right)+O_N\left(\sum_{i=0}^\infty\frac{(-1)^ix^{i+3}}{p^{i+1}(mp)^{i+3}\log^{N+1} \left({\frac{x}{mp}}\right)^{i+3}}\right) \\
    &=  p^2\sum_{i=0}^\infty \left(\frac{x}{mp^2}\right)^{i+3} \sum_{k=0}^{N-1}\left(\frac{(-1)^i k!}{(i+3)^{k+1} \log^{k+1} \frac{x}{m} \left(1 - \frac{\log p}{\log \frac{x}{m}}\right)^{k+1} }\right)+O_N\left(\frac{x^3}{m^3p^4\log^{N+1} \frac{x}{mp}}\right) \\
    &=  p^2\sum_{i=3}^\infty \left(\frac{x}{mp^2}\right)^{i} \sum_{k=0}^{N-1}\left(\frac{(-1)^{i+1} k!}{i^{k+1} \log^{k+1} \frac{x}{m} }\sum_{j=0}^\infty\left( \binom{k{+}j}{j} \left(\frac{\log p}{\log \frac{x}{m}}\right)^j\right)\right){+}O_N\left(\frac{(x/m)^3}{p^4\log^{N+1} \frac{x}{mp}}\right). \numberthis \label{eq:s2qintegral} 
\end{align*}

Here we have used the series expansion $\left(\frac{1}{1-x}\right)^{k+1} = \displaystyle{\sum_{j=0}^\infty \left(\binom{k+j}{j} x^j\right)}$.  Inserting this into \eqref{eq:cleans2}, and applying partial summation to p we get that

\begin{align*}
    S_2 &= \sum_{m<x^\epsilon} \left(\int_{\sqrt{\frac{x}{m}}}^{x^{1-\epsilon}} \frac{t^2}{\log t}\sum_{i=3}^\infty \left(\frac{x}{mt^2}\right)^{i} \sum_{k=0}^{N-1}\frac{(-1)^{i+1} k!}{i^{k+1} \log^{k+1} \frac{x}{m} }\sum_{j=0}^\infty\left( \binom{k{+}j}{j} \left(\frac{\log t}{\log \frac{x}{m}}\right)^j\right)dt +O_N\left(\frac{(x/m)^{3/2}}{\log^{N+2} \frac{x}{m}}\right)\right)\\
    &= \sum_{m<x^\epsilon} \left( \sum_{i=3}^\infty \left((-1)^{i+1}\left(\frac{x}{m}\right)^i \sum_{k=0}^{N-1}\left(\frac{(-1)^{i+1} k!}{i^{k+1} \log^{k+1} \frac{x}{m} }\int_{\sqrt{\frac{x}{m}}}^{x^{1-\epsilon}}  \frac{1}{t^{2i-2}\log t} \sum_{j=0}^\infty\left( \binom{k{+}j}{j} \left(\frac{\log t}{\log \frac{x}{m}}\right)^j\right)dt\right)\right)\right.\\
    &\hspace{11cm} \left. \vphantom{ \sum_{j=0}^\infty }+O_N\left(\frac{(x/m)^{3/2}}{\log^{N+2} \frac{x}{m}}\right)\right). \numberthis \label{eq:s2pint}
\end{align*}

Using the fact that
\begin{equation}
\int \frac{\log^{j-1} p}{t^{2i-2}} dt = 
\begin{cases} \li\left(t^{-2i+3}\right) + C &\mbox{if } j=0 \\ 
-\displaystyle{\frac{\log^{j-1} t}{(2i-3)t^{2i-3}} \sum_{l=0}^{j-1} \frac{j!}{(j-l)!(2i-3)^l \log^l t}} + C & \mbox{if } j \geq 1 \end{cases}
\end{equation} 
the integral in the expression above becomes
\begin{align*}
&\left. \left(\li\left(\frac{1}{t^{2i-3}}\right)-\frac{1}{t^{2i-3}}\sum_{j=1}^\infty \left(\binom{k+j}{j} \left(\frac{1}{\log \frac{x}{m}}\right)^j\sum_{l=0}^{j-1} \frac{j!\log^{j-1} t}{(j-l)!(2i-3)^{l+1} \log^l t} 
 \right)\right)\right|_{t=\sqrt{\frac{x}{m}}}^{t=x^{1-\epsilon}}.
\end{align*}
When evaluated at $t=x^{1-\epsilon}$, the above expression can be absorbed into the existing error term, so we need only evaluate this expression when $t=\sqrt{\frac{x}{m}}$, in which case it becomes
\begin{align*}
&-\left(\li\left(\left(\frac{x}{m}\right)^{-\frac{2i-3}{2}}\right){-}\left(\frac{x}{m}\right)^{-\frac{2i-3}{2}}\sum_{j=1}^\infty \left(\binom{k{+}j}{j} \frac{1}{2^{j-1} \log \frac{x}{m}}\sum_{l=0}^{j-1} \frac{j!2^l }{(j{-}l)!(2i{-}3)^{l+1} \log^l \frac{x}{m}} \right) \right)\\
&=-\left(\li\left(\left(\frac{x}{m}\right)^{-\frac{2i{-}3}{2}}\right){-}\left(\frac{x}{m}\right)^{-\frac{2i-3}{2}}\sum_{l=0}^\infty \frac{2^{l+1} }{(2i{-}3)^{l+1} \log^{l+1} \frac{x}{m}}\sum_{j=l+1}^{\infty} \left(\binom{k{+}j}{j}  \frac{j! }{2^j(j{-}l)!} \right)\right). \numberthis \label{eq:s2expanded}
\end{align*}
The sum over $j$ can be evaluated as $\frac{\left(2^{k+1}-2^{-l}\right) ({l+k})!}{k!}$, and the sum over $l$ can be truncated at $N-k-1$, with the remainder of the sum being absorbed into the existing error term, $O_N\left(\frac{(x/m)^{3/2}}{\log^{N+2} \frac{x}{m}}\right)$.  We can also again use the asymptotic expansion of the logarithmic integral, again truncating at $N-k-1$, which turns the expression in \eqref{eq:s2expanded} into 
\begin{align*}
\left(\frac{x}{m}\right)^{-\frac{2i-3}{2}}\sum_{l=0}^{N-k-1}\left(\frac{2}{(2i-3)^{l+1} \log^{l+1} \frac{x}{m}}\left(l!(-1)^{l}2^l+\frac{(2^{l+k+1}-1)(l+k)!}{k!}\right)\right).
\end{align*}
Inserting this back into \eqref{eq:s2pint} in place of the integral, and reindexing so that the summation runs over the exponent on the logarithm we get that the main term is
\begin{align*}
\sum_{m<x^\epsilon}\left(\frac{x}{m}\right)^{\frac{3}{2}}\sum_{i=3}^\infty \sum_{h=0}^{N-1}\left( \frac{(-1)^{i+1}}{\log^{h+2} \frac{x}{m}}\sum_{k=0}^h\left(\frac{2}{i^{k+1}(2i-3)^{h-k+1}}\left(k!(h-k)!(-1)^{h-k}2^{h-k}+(2^{h+1} -1)h!\right)\right)\right).
\end{align*}

Combining that with the corresponding estimate, \eqref{eq:s1pint}, we have that the entire main term of the sum that we are interested in, \eqref{eq:s1s2} is equal to 
\begin{align*}
 \sum_{m\leq x/6}\left(\frac{x}{m}\right)^{\frac{3}{2}} \sum_{h=0}^{N-1}\frac{1}{\log^{h+2} \frac{x}{m}} \sum_{k=0}^{h}\sum_{i=3}^\infty\frac{2(-1)^{i+1}}{i^{k+1}} \left(\frac{2^{h+1}3^{k-h-1} h!}{(k+1)} +  \frac{\left(k!(h-k)!(-2)^{h-k}+\left(2^{h+1}-1\right)h!\right)}{(2i-3)^{h+1-k}}\right)
\end{align*}
with an error term of $O_N\left(\frac{(x/m)^{3/2}}{\log^{N+2} \frac{x}{m}}\right)$.  Note that the quantity given by the double summation over $k$ and $i$ is a constant depending only on $h$, which we call $C_h$. Thus 
\begin{align*}
    \sum_{n\leq x} \big(a(n) - P_1(n)\big) &= x^{\frac{3}{2}} \sum_{m\leq x^\epsilon}\left( \frac{1}{m^{\frac{3}{2}}}\sum_{h=0}^{N-1}\frac{C_h}{\log^{h+2} \frac{x}{m}} +O_N\left(\frac{1}{m^{\frac{3}{2}}\log^{N+2} \frac{x}{m}}\right)\right)\\
    &= x^{\frac{3}{2}} \sum_{h=0}^{N-1} C_h \sum_{m\leq x^\epsilon} \frac{1}{m^{\frac{3}{2}}}\frac{1}{\log^{h+2} \frac{x}{m}} +O_N\left(\frac{x^{\frac{3}{2}}}{\log^{N+2} x}\right).\\
\end{align*}
Since 
\begin{align*}
    \sum_{m\leq x^\epsilon} \frac{1}{m^{\frac{3}{2}}\log^{h+2} \frac{x}{m}} &= \sum_{m\leq x^\epsilon} \frac{1}{m^{\frac{3}{2}}\log^{h+2} x}\left(\frac{1}{1-\frac{\log m}{\log x}}\right)^{h+2}  \\
    &= \sum_{m\leq x^\epsilon} \frac{1}{m^{\frac{3}{2}}\log^{h+2} x}\sum_{j=0}^\infty\binom{h+1+j}{j}\left(\frac{\log m}{\log x}\right)^j\\
    &=\sum_{j=0}^\infty \frac{\binom{h+j+1}{j}}{\log^{h+j+2} x}\sum_{m\leq x^\epsilon} \frac{\log^j m}{m^{\frac{3}{2}}} =\sum_{j=0}^{N-h-1} \frac{\binom{h+j+1}{j}}{\log^{h+j+2} x}(-1)^j\zeta^{(j)} \left(\tfrac{3}{2}\right) +O_N\left(\frac{1}{\log^{N+2} x}\right),
\end{align*}
where $\zeta^{(j)}\left(\frac{3}{2}\right)$ denotes the $j$th derivative of the zeta function at $\frac{3}{2}$, we can conclude that 
\begin{align*}
     \sum_{n\leq x} \big(a(n) - P_1(n)\big) &= x^{\frac{3}{2}} \sum_{h=0}^{N-1} C_h \sum_{j=0}^{N-h-1} \frac{\binom{h+j+1}{j}}{\log^{h+j+2} x}(-1)^j\zeta^{(j)} \left(\tfrac{3}{2}\right) +O_N\left(\frac{x^{\frac{3}{2}}}{\log^{N+2} x}\right)\\
     &= x^{\frac{3}{2}} \sum_{\ell=0}^{N-1}\frac{1}{\log^{\ell+2} x}\sum_{h=0}^\ell \binom{\ell+1}{\ell-h} C_h (-1)^j\zeta^{(\ell-h)} \left(\tfrac{3}{2}\right) +O_N\left(\frac{x^{\frac{3}{2}}}{\log^{N+2} x}\right)\\
     &= x^{\frac{3}{2}} \sum_{\ell=0}^{N-1}\frac{D_\ell}{\log^{\ell+2} x} +O_N\left(\frac{x^{\frac{3}{2}}}{\log^{N+2} x}\right)
\end{align*}
where 
\begin{equation}
D_\ell = 2\sum_{h=0}^\ell \binom{\ell{+}1}{\ell{-}h}\zeta^{(\ell-h)} \left(\tfrac{3}{2}\right)  \sum_{k=0}^{h}\sum_{i=3}^\infty\frac{(-1)^{\ell-h+i+1}}{i^{k+1}} \left(\frac{2^{h+1} h!}{3^{h-k+1}(k+1)} +  \frac{\left(k!(h{-}k)!(-2)^{h-k}{+}\left(2^{h+1}{-}1\right)h!\right)}{(2i-3)^{h+1-k}}\right). \label{eq:constants}
\end{equation}
In the case $\ell=0$ this reduces to
\begin{align*}
    D_0 &= 2\zeta\left(\tfrac{3}{2}\right)\sum_{i=3}^\infty \frac{(-1)^{i+1}}{i}\left(\frac{2}{3}+\frac{2}{2i-3}\right)\\
    &= 2\zeta\left(\tfrac{3}{2}\right)\sum_{i=3}^\infty (-1)^{i+1}\left(\frac{2}{3i}+\left(-\frac{2}{3i}+\frac{4}{3(2i-3)}\right)\right)\\
    &= \frac{8\zeta\left(\tfrac{3}{2}\right)}{3}\sum_{i=3}^\infty \frac{(-1)^{i+1}}{2i-3} = \frac{8\zeta\left(\tfrac{3}{2}\right)}{3}\left(1-\frac{\pi}{4}\right).
\end{align*}
\end{proof}
\section{Open questions and acknowledgements}
There are many remaining questions and directions that this research could be further pursued.  A few examples of potentially interesting questions are given below.
\begin{question}
Is $a(n)$ ever an integer when $n$ is not a prime power?
\end{question}
Computations so far have failed to find any examples of such an integer.
\begin{question}
How many distinct residues modulo $n$ is this random walk expected to visit?
\end{question}
\begin{question}
What can be said about the variance of the time required for this random walk?
\end{question}
\begin{question}
How do these results change if the residues chosen randomly to multiply the current state of the walk are chosen without replacement?
\end{question}

A portion of this work appeared in my Ph.D. thesis \cite{mcnewthesis}, written under the direction of Carl Pomerance, whose helpful suggestions were invaluable in the development of this paper. I would
also like to thank Sarah Wolff, whose Graduate Student Seminar talk on random walks on monoids inspired questions which led to the results in this paper, and Peter Winkler for helpful discussions.
\bibliographystyle{amsplain}
\bibliography{bibliography}

\end{document}